\documentclass[11pt]{amsart}
\usepackage[margin=1.5in]{geometry}

\usepackage{xcolor}
\usepackage{amssymb}

\newtheorem{theorem}{Theorem}[section]
\newtheorem{lemma}[theorem]{Lemma}
\theoremstyle{definition}
\newtheorem{definition}[theorem]{Definition}
\newtheorem{example}[theorem]{Example}

\theoremstyle{remark}
\newtheorem{remark}[theorem]{Remark}

\numberwithin{equation}{section}

\newcommand{\nats}{\mathbb{N}}
\newcommand{\ints}{\mathbb{Z}}
\newcommand{\reals}{\mathbb{R}}
\newcommand{\xp}{\mathbb{E}}

\newcommand{\complex}{\mathbb{C}}
\newcommand{\el}{\mathcal{L}}
\newcommand{\cupdot}{\mathbin{\mathaccent\cdot\cup}}

\DeclareMathOperator{\sgn}{sgn}

\begin{document}

\title[A New Proof of the Abstract Random Tensor Estimate] {A New Proof of the Abstract Random Tensor Estimate by Deng, Nahmod, and Yue}

\author{Claire Kaneshiro}
\address{Claire Kaneshiro, Department of Mathematics, Princeton University Princeton, NJ 08544}
\email{clairekaneshiro@princeton.edu}
\subjclass[2020]{60B20, 15B52, 33C45, 35Q55}
\date{\today}

\begin{abstract}
We provide a new proof of the abstract random tensor estimate. This estimate was initially proven by Deng, Nahmod, and Yue (2022) using the moment method. The key new tool in our proof is the direct use of the non-commutative Khintchine inequality with the probabilistic decoupling of the product of Gaussians. Hermite and generalized Laguerre-type polynomials allow us to account for pairings in the real and complex-valued Gaussians, respectively, and remove the square-free (tetrahedral) requirement. 
\end{abstract}

\maketitle
\section{Introduction}
The non-commutative Khintchine inequality is an important estimate in random matrix theory \cite[Theorem 3.2 and Corollary 3.3]{vH17}. For $\mathbb{F} \in \{\reals, \complex \}$, let $X \in \mathbb{F}^{n\times n}$ be a random matrix whose entries are centered and jointly Gaussian. Then $X$ can be written as $X = \sum_{k=1}^s g_k A_k$, where $(g_k)_{k=1}^s$ is a sequence of independent standard Gaussians and $(A_k)_{k=1}^s \subset \mathbb{F}^{n\times n}$ are the coefficient matrices. The non-commutative Khintchine inequality states that
\begin{equation} \label{eq: noncommutative Khintchine Intro}
    \xp\Bigg[\bigg\| \sum_{k=1}^s g_k A_k \bigg\|_{\text{op}}^p \Bigg]^{\frac{1}{p}} \lesssim \sqrt{ p \log n} \max \Bigg( \bigg\|\sum_{k=1}^s A_kA_k^*  \bigg\|_{\text{op}} ^{\frac{1}{2}}, \bigg\|\sum_{k=1}^s A_k^* A_k  \bigg\|_{\text{op}} ^{\frac{1}{2}}\Bigg).
\end{equation}
Effectively, this allows us to bound the operator norm of random matrix by its underlying covariance structure, with only logarithmic loss in terms of dimension. For more precise discussion of this statement see Lemma~\ref{NoncommutativeKhintchine}.
Recently, Deng, Nahmod, and Yue \cite[Proposition 2.8 and Proposition 4.14]{DNY20} proved the abstract random tensor estimate, which is a higher-order generalization of the non-commutative Khintchine inequality.
Fundamentally, the random tensor estimate replaces the Gaussian $g_k$ with the product of $m$ Gaussians $g_{n_1}g_{n_2}...g_{n_m}$ and replaces the coefficient matrices $A_k$ by tensors (defined below). 

\begin{definition}[Tensors]
    A tensor is a map $h:(\ints^d)^J \rightarrow \reals$, where $J$ is a finite set. If $J = \{j_1, ..., j_k\}$, we often write $h=h_{n_J} = h_{n_{j_1}...n_{j_k}}$, where $n_J = (n_j : j \in J)$ are the input variables.
\end{definition}
When the basis is fixed, the tensor can be viewed as a higher-dimensional matrix. 

\begin{definition} [Tensor norm] \label{def: Tensor Norm}
    Let $h = h_{n_J}$ be a tensor. We say $X$ and $Y$ form a partition of $J$ if $X \cup Y = J$ and $X \cap Y = \emptyset$. We denote this by $X \cupdot Y = J$. Then, for such $X,Y$ we define the tensor norm $\| \cdotp \|_{n_X\rightarrow n_Y}$ as follows
    \begin{equation} \label{eq: Tensor Norm}
        \|h\|_{n_X\rightarrow n_Y}^2 := \text{sup} \Bigg\{ \sum_{n_Y} \bigg| \sum_{n_X} h_{n_J}v_{n_X}\bigg|^2: \sum_{n_X\in (\ints^d)^X} \Big|v_{n_X}\Big|^2 \leq 1 \Bigg\}.
    \end{equation}

\end{definition}

We further remark that, even for a fixed index set $J$, the partition $X \cupdot Y$ is not unique. For distinct partitions into $X$ and $Y$, we end up with distinct tensor norms. However, if we fix $X$ and $Y$, we can view $h$ as a linear operator from $\ell_{n_X}^2$ to $\ell_{n_Y}^2$, where $l^2_{n_Z}$ denotes the space of square-summable sequences $(x_{n_Z})_{n_Z \in (\ints^d)^Z}$, for $Z \in \{X,Y\}$. 
In this case, the tensor norm coincides with the usual operator norm. \\

The objective of this paper is to provide a new simple proof the abstract random tensor estimate (for the original proof by Deng-Nahmod-Yue we refer to \cite[Propositions 2.8 and 4.14]{DNY20}). Technically, we state a slightly different version of the theorem than in \cite{DNY20}, since we drop the square-free condition. 

\begin{theorem} [\protect{Abstract random tensor estimate \cite[Proposition 4.14]{DNY20}}] \label{Abstract Random Tensor Estimate}
Fix $k \in \nats$. Let $h = h_{n_Jn_An_B}$ be a deterministic tensor, where $A$ and $B$ are finite index sets and $J = \{1, 2, \dots, k\}$. Let $n_J = (n_1, n_2, ..., n_k) \in (\ints^d)^k$ and $N \in 2^{\nats}$, such that 
\begin{equation*}
    |n_1|, ..., |n_k|, \max_{a \in A}|n_a|, \max_{b \in B}|n_b| \leq N 
\end{equation*}
on the support of $h$, where $|\cdot|$ is the standard $\ell_1$ norm. Functionally, the number $N$ is a rough bound on the size of the support.

Furthermore, let $(g_n)_{n \in \ints^d}$ be a sequence of pairwise independent, standard complex-valued Gaussians. Fix the signs $\{\iota_1, \iota_2, \dots, \iota_k\} \in \{\pm 1\}^k$, with $g^{\iota}$ define by $g^{+1}=g$ and $g^{-1}=\overline{g}$. Then let $G = G_{n_An_B}$ be the random tensor defined as 
  \begin{equation*}
        G_{n_An_B} := \sum_{n_J \in (\ints^d)^k} h_{n_Jn_An_B} :\prod_{j\in J} g_{n_j}^{\iota_j}:\ ,
    \end{equation*}
    where $:\prod_{j\in J} g_{n_j}^{\iota_j}:$ is the renormalization of the product $\prod_{j\in J} g_{n_j}^{\iota_j}$, which will be defined using Laguerre polynomials (see Section~\ref{section: Laguerre}). 
Then it holds for all $p \geq 1$ that 
    \begin{equation}
       \xp[\|G\|^p_{n_A\rightarrow n_B}]^{\frac{1}{p}} \leq C p^{\frac{k}{2}} (\log N)^{\frac{k}{2}} \max_{X \cupdot Y = {J}} \|h\|_{n_An_X \rightarrow n_Bn_Y},
    \end{equation}
where $C=C(k,d,|A|,|B|)$ is a constant depending on the number of Gaussians in the product $k$, the dimension $d$, and the cardinality of the index sets $A$ and $B$.
\end{theorem}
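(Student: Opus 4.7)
The plan is to argue by induction on $k$, using the non-commutative Khintchine inequality (Lemma~\ref{NoncommutativeKhintchine}) as the base case and a probabilistic decoupling of the Wick product as the structural tool that enables the induction.

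The base case $k=1$ is essentially immediate: $G_{n_An_B} = \sum_{n_1} h_{n_1 n_A n_B}\,g_{n_1}^{\iota_1}$ is a Gaussian series with matrix coefficients $A_{n_1} = (h_{n_1 n_A n_B})_{n_A,n_B}$, and \eqref{eq: noncommutative Khintchine Intro} directly yields the bound $\sqrt{p\log N}\,\max(\|\sum_{n_1} A_{n_1} A_{n_1}^*\|^{1/2},\|\sum_{n_1} A_{n_1}^* A_{n_1}\|^{1/2})$, which is exactly $\sqrt{p\log N}\,\max_{X\cupdot Y=\{1\}}\|h\|_{n_An_X\to n_Bn_Y}$.

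For the inductive step I would first replace the Wick product $:\prod_{j\in J} g_{n_j}^{\iota_j}:$ by the fully decoupled product $\prod_{j\in J} g^{(j)\,\iota_j}_{n_j}$, where $(g^{(j)})_{j=1}^k$ are independent copies of the Gaussian sequence $(g_n)$. The generalized Laguerre polynomial construction of the renormalization in Section~\ref{section: Laguerre} makes this substitution an exact identity rather than a one-sided decoupling inequality, because the Laguerre polynomials evaluated at $|g_n|^2$ are precisely the objects that absorb the pairing corrections that arise when several of the indices $n_j$ coincide; this is what allows the tetrahedral / square-free assumption of \cite{DNY20} to be dropped. I would then condition on $g^{(1)},\dots,g^{(k-1)}$ and apply the non-commutative Khintchine inequality in the single remaining Gaussian $g^{(k)}$, viewing the conditional coefficient tensor as an $\ell^2_{n_A}\to\ell^2_{n_B}$ operator indexed by $n_k$. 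This contributes one factor of $\sqrt{p\log N}$ and produces two candidate covariance tensors obtained by contracting the $n_k$ slot of $h$ against its complex conjugate on either the input or the output side; these two options correspond to placing $k$ in $X$ or in $Y$ in the eventual partition. Each covariance object is itself an order-$(k-1)$ random tensor in $g^{(1)},\dots,g^{(k-1)}$, built from the contracted tensor $H = \sum_{n_k} h\,\overline{h}$ rather than from $h$; extracting the conditional operator norm at the level of $L^p$ moments (via Minkowski/Cauchy--Schwarz, conceding only a constant in the exponent) and applying the inductive hypothesis at order $k-1$ to $H$ supplies the further factor $p^{(k-1)/2}(\log N)^{(k-1)/2}$ together with a maximum over partitions of $J\setminus\{k\}$ of a tensor norm of $H$. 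The identity $\|T T^*\|_{\text{op}} = \|T\|_{\text{op}}^2$, used at the level of $\|\cdotp\|_{n_X\to n_Y}$, then relates these tensor norms of $H$ to the squares of tensor norms of $h$ on partitions of $J$ in which $k$ sits on the Khintchine-selected side, so that the two branches of Khintchine together reconstruct $\max_{X\cupdot Y=J}\|h\|_{n_An_X\to n_Bn_Y}$ and give the claimed bound.

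The main obstacle will be the partition bookkeeping: each Khintchine step doubles the tensor order (via $h \mapsto hh^*$), and one must verify that the tensor norms of the doubled object on partitions of $J\setminus\{k\}$ correspond \emph{exactly} — not merely up to further loss — to tensor norms of $h$ on partitions of $J$ containing $k$ on the side chosen by Khintchine. A secondary but delicate point is that the decoupling of the Wick product must be lossless in the $p$-dependence, so that the final exponent is $p^{k/2}$ rather than something worse; this is where the precise Laguerre-polynomial identity of Section~\ref{section: Laguerre} is strictly stronger than a classical soft decoupling inequality, and where the handling of the complex-valued Gaussian case (via generalized Laguerre polynomials rather than Hermite polynomials) plays an essential role.
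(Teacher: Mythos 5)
Your overall architecture (non-commutative Khintchine for the base case, decouple the Wick product, then induct) matches the paper's, but two of your structural claims do not hold as stated, and the second one is fatal to the induction as you have described it.

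First, the decoupling via Laguerre-type polynomials is \emph{not} an exact identity. Even in the simplest real case, $:g_n^2: = g_n^2 - 1$ has second moment $2$, while the fully decoupled $g_n^{(1)}g_n^{(2)}$ has second moment $1$; the two cannot be equal in law, let alone as an identity inside an $L^p$ norm. What the paper's Lemma~\ref{lemma: Probabilistic Decoupling} actually proves is a one-sided \emph{inequality}, and moreover it decouples only \emph{one} Gaussian at a time: via the interpolation $g(\varphi) = \sin\varphi\, g + \cos\varphi\,\tilde g$, Lemma~\ref{lemma: Laguerre-type partial derivatives}, and the rotational invariance of $(g(\varphi),\partial_\varphi g(\varphi))$, one obtains
\[
\xp\Big[\big\|{\textstyle\sum} h\,\mathcal L(g_{n_J}^{\iota_J})\big\|^p\Big]^{1/p} \le \tfrac{\pi}{2}\sum_{j\in J}\xp\Big[\big\|{\textstyle\sum} h\,\tilde g_{n_j}\,\mathcal L(g_{n_{J\setminus j}}^{\iota_{J\setminus j}})\big\|^p\Big]^{1/p},
\]
conceding a $\frac{\pi}{2}k$ factor per step. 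Replacing the Wick product with $k$ independent families in one shot, as an equality, is not available.

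Second, and more seriously, the inductive step as you describe it cannot close. After conditioning on the first $k-1$ families and applying Khintchine in $g^{(k)}$, the covariance object $\sum_{n_k} T_{n_k}T_{n_k}^*$ is a quadratic form in the conditional tensor $T_{n_k}$, hence a random tensor of order $2(k-1)$ in the remaining Gaussians, and it is not centered (it contains deterministic diagonal terms from the pairings within each family). It is therefore not of the form to which the inductive hypothesis at order $k-1$ applies; applying it to the contracted deterministic tensor $H = \sum_{n_k} h\,\overline h$ ignores that the randomness has been squared along with $h$. One would have to re-Wick-expand this quadratic, bound each graded piece, and track the $p$-dependence, which is precisely the uncontrolled ``partition bookkeeping'' you flag but do not resolve. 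The paper sidesteps this entirely: in Lemma~\ref{lemma: GaussianCase} the Khintchine bound is immediately converted, via the merging estimate (Lemma~\ref{lemma: MergingEstimate}) and tensor-norm duality, into $\sqrt{p\log N}\max_{X\cupdot Y=\{j\}}\|\,\cdot\,\|_{n_An_X\to n_Bn_Y}$ applied to the \emph{conditional coefficient tensor} $\sum_{n_{\tilde J}} h\,\mathcal L(g_{n_{\tilde J}}^{\iota_{\tilde J}})$, which is still \emph{linear} in $h$ and a genuine order-$(k-1)$ Wick-type random tensor. The inductive hypothesis then applies verbatim with $n_j$ absorbed into $A$ or $B$, and the two branches of Khintchine exactly reproduce $\max_{X\cupdot Y = J}$ by merging the partitions of $\{j\}$ and $J\setminus\{j\}$. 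I would encourage you to replace the covariance step of your plan with this merging-estimate step; once you do, your proposal becomes essentially the paper's proof.
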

Fundamentally, we bound (in expectation) the spectral norm of the random tensor in terms of the underlying deterministic structure. 
We will discuss other differences below, but one key new component of our proof is this renormalization, which allows us to account for pairings. We first recall the definition of a pairing. 
\begin{definition}
  With the same assumptions as Theorem~\ref{Abstract Random Tensor Estimate}. For $n_1, n_2 \in \ints^d$ with corresponding signs $\iota_1,\iota_2 \in \{\pm 1\}$, we say $(g_{n_1}, g_{n_2})$ form a \textit{pairing} if $\iota_1 + \iota_2 = 0$ and $n_1 = n_2$. In the real-valued case, $(g_{n_1},g_{n_2})$ form a pairing if $n_1 = n_2$. 
\end{definition}

Note that if $g_{n_1}^{\iota_1}$ and $g_{n_2}^{\iota_2}$ form a pairing, then the two Gaussians are not independent and the expectation of the product $g_{n_1}^{\iota_1}g_{n_2}^{\iota_2}$ is not centered. Deng, Nahmod, and Yue \cite{DNY20} require that there are no pairings (the product is square-free or tetrahedral) in $n_J$ (on the support of the deterministic tensor). Therefore, the product does not require normalization. 
We give an explicit construction in terms of Hermite and Laguerre-type polynomials to account for the pairings in real-valued and complex-valued, respectively (see Section 3 for details).

\begin{example}
    If we let $h = h_{abcd}$ be a deterministic tensor, where $a,b,c,d$ are finite indices with size at most $N$. Let $(g_a)_{a=1}^n$ and $(g_b)_{b = 1}^m$ be sets of pairwise independent complex-valued Gaussians, where $m$,$n$ $\leq N$. Then, it holds that 
     \begin{align*}
        & \Big\|\sum_{a, b} h_{abcd} (g_a\overline{g_b} -\delta_{ab}) \Big\|_{c \rightarrow d} 
         \\ &\leq C p (\log N) \max\left\{ \|h\|_{abc \rightarrow d}, \|h\|_{ac \rightarrow bd}, \|h\|_{bc \rightarrow ad}, \|h\|_{c \rightarrow abd} \right\},
     \end{align*}
     where $C$ is a constant and $(g_a\overline{g_b} -\delta_{ab})$ is the renormalization of $g_a\overline{g_b}$.
\end{example}

The abstract random tensor estimate was initially used by Deng-Nahmod-Yue \cite{DNY20} for power-type nonlinear Schr\"odinger equations. Since then it has been used in other dispersive PDE contexts; see also Bringmann \cite{B24}, Oh- Wang-Zine \cite{OWZ22}, and Deng-Nahmod-Yue \cite{DNY21}. In fact, Bringmann-Deng-Nahmod-Yue \cite{BDNY24} proved a bilinear random tensor estimate in the case where there is one Gaussian, instead of a product of $k$ Gaussians. It may be interesting to see whether our argument can be used for variants of \cite[Lemma 8.1]{BDNY24}.

The original proof of the abstract random tensor in \cite{DNY20} uses the moment-method (see \cite{vH17} Section 2.2). Whereas our proof uses the non-commutative Khintchine inequality directly, by decoupling the random variables.
Our proof is also more modular and depends on the following four tools:
\begin{enumerate}
    \item [(i)] Laguerre polynomials (in Section 3)
    \item [(ii)] Tensor merging estimate (Lemma~\ref{lemma: MergingEstimate}, in Section 2.1)
    \item [(iii)] Gaussian case (Lemma~\ref{lemma: GaussianCase}, in Section 4)
    \item [(iv)] Probabilistic decoupling (Lemma~\ref{lemma: Probabilistic Decoupling}, in Section 4)
\end{enumerate}

\begin{remark}
This estimate is motivated by applications to PDEs and we present this proof in the language of this field. Though, some of the tools we use have been known implicitly or explicitly in the free probability community.
References to higher order noncommutative Khintchine inequalities, such as in Theorem \ref{Abstract Random Tensor Estimate}, date back to the work of Haagerup and Pisier, see \cite[Remark 9.8.9]{P03} and \cite{HP93}.
Also, it is a known result that Hermite polynomials satisfy a non-square-free decoupling inequality (see, for example, \cite[Section 2]{AG93}).
Moreover, the construction of the random tensor in Theorem~\ref{Abstract Random Tensor Estimate} is similar to the matrix chaos studied by Bandeira-Lucca-Nizi\'c-Nikolac-van Handel  \cite[Section 2]{BLNvH24}. However, this model requires the square-free condition, whereas our argument can account for the pairings. 
\end{remark}
\vspace{5mm}

\section{Background}

\subsection{Deterministic background}

\begin{lemma} [Tensor norm duality]
    Let $h = h_{n_J}$ be a tensor and let $X$ and $Y$ be a partition of $J$.
    We observe that the square root of \eqref{eq: Tensor Norm} is an $\ell_2$-based operator norm of the tensor $(\sum_{n_X} h_{n_J}v_{n_X})$. Therefore, by duality, we also have that 
     \begin{equation} \label{eq: Tensor Norm Duality}
        \|h\|_{n_X\rightarrow n_Y} := \text{sup} \Big\{ \Big| \!\sum_{n_X, n_Y}\!  \! h_{n_J}  v_{n_X} w_{n_Y} \Big| : \! \! \! \sum_{n_X\in (\ints^d)^X} \! |v_{n_X}|^2,  \! \! \! \sum_{n_Y\in (\ints^d)^Y} \! |w_{n_Y}|^2 \leq 1 \Big\},
    \end{equation}

    which implies that 
    \begin{equation*} 
    \| \overline{h}\|_{n_X \rightarrow n_Y}=  \| h\|_{n_X \rightarrow n_Y} = \| h\|_{n_Y \rightarrow n_X}.
    \end{equation*} 
\end{lemma}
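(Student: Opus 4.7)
The plan is to identify $\|h\|_{n_X \to n_Y}$ with a genuine operator norm between Hilbert spaces, and then invoke the standard duality between the operator norm and the bilinear pairing. There is no real obstruction here; the argument is essentially functional-analytic boilerplate. The only point I expect to require a little care is bookkeeping of complex conjugates when converting the sesquilinear inner product into the bilinear expression claimed in \eqref{eq: Tensor Norm Duality}.

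First, I would fix the partition $X \cupdot Y = J$ and introduce the linear map $T_h : \ell^2((\ints^d)^X) \to \ell^2((\ints^d)^Y)$ by $(T_h v)_{n_Y} := \sum_{n_X} h_{n_J} v_{n_X}$. With this convention, the quantity inside the supremum in the definition \eqref{eq: Tensor Norm} is exactly $\|T_h v\|_{\ell^2}^2$, so
\[
\|h\|_{n_X \to n_Y} \;=\; \|T_h\|_{\ell^2((\ints^d)^X) \to \ell^2((\ints^d)^Y)}.
\]
(If $h$ is not even bounded as such an operator, both sides are $+\infty$ and every identity below is vacuous.)

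Next, I would apply the elementary Hilbert-space duality $\|T\| = \sup\{|\langle Tv, w\rangle| : \|v\|_{\ell^2}, \|w\|_{\ell^2} \leq 1\}$, one direction of which is Cauchy--Schwarz and the other of which is obtained by testing with $w = T_h v / \|T_h v\|_{\ell^2}$. Unfolding the inner product gives
\[
\langle T_h v,\, w \rangle \;=\; \sum_{n_Y} \sum_{n_X} h_{n_J}\, v_{n_X}\, \overline{w_{n_Y}}.
\]
Since $w \mapsto \overline{w}$ is an isometry of $\ell^2$, I can rename $\overline{w_{n_Y}}$ as a fresh $\ell^2$ test vector without changing the supremum, which delivers precisely the bilinear expression in \eqref{eq: Tensor Norm Duality}.

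Finally, the two claimed identities fall out of this dual formula essentially for free. The right-hand side of \eqref{eq: Tensor Norm Duality} is visibly symmetric under swapping $(v_{n_X}, X)$ with $(w_{n_Y}, Y)$, which immediately yields $\|h\|_{n_X \to n_Y} = \|h\|_{n_Y \to n_X}$. To obtain $\|\overline{h}\|_{n_X \to n_Y} = \|h\|_{n_X \to n_Y}$, I would replace $h$ by $\overline{h}$ in \eqref{eq: Tensor Norm Duality} and then substitute $(\overline{v}, \overline{w})$ for $(v, w)$; the substitution is again an isometry of the unit ball in $\ell^2$, and it pulls the complex conjugate outside so that, after taking the absolute value, it disappears.
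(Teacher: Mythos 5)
Your argument is correct and matches what the paper intends: the paper itself only sketches this by saying "by duality," and you have simply spelled out the standard Hilbert-space duality $\|T\| = \sup\{|\langle Tv, w\rangle| : \|v\|, \|w\| \le 1\}$, absorbed the conjugate on $w$ by an isometric change of test vector, and read off the two symmetry identities from the resulting bilinear expression. No gaps.
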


\begin{lemma}[\protect{Merging estimate \cite[Lemma 2.5]{DNY20}}] 
\label{lemma: MergingEstimate}
Let $A_1, A_2, B_1, B_2,$ and $C$ be disjoint finite index sets and let $h^{(1)} = h^{(1)}_{n_{A_1} n_{B_1} n_{C}}$ and $h^{(2)} = h^{(2)}_{n_{A_2} n_{B_2} n_{C}}$ be two different tensors. Then, it holds that 
\begin{align*}
    &\Big\| \!\sum_{n_C} h^{(1)}_{n_{A_1} n_{B_1} n_{C}} h^{(2)}_{n_{A_2} n_{B_2} n_{C}} \Big\| _{n_{A_1} n_{A_2}\rightarrow n_{B_1} n_{B_2}} \!\!\! \\
    &\leq  \big\|  h^{(1)}_{n_{A_1} n_{B_1} n_{C}} \big\| _{n_{A_1} \rightarrow n_{B_1} n_{C}} \big\| h^{(2)}_{n_{A_2}n_{B_2} n_{C}} \big\| _{n_{A_2} n_{C} \rightarrow n_{B_2}}.
\end{align*}
\end{lemma}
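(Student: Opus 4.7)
The plan is to apply the duality formulation \eqref{eq: Tensor Norm Duality} to the left-hand side, and then decouple the two tensors using a single Cauchy--Schwarz in the contracted variable $n_C$. Let
\[
T_{n_{A_1}n_{A_2}n_{B_1}n_{B_2}} := \sum_{n_C} h^{(1)}_{n_{A_1}n_{B_1}n_C}\, h^{(2)}_{n_{A_2}n_{B_2}n_C}.
\]
By duality, the norm $\|T\|_{n_{A_1}n_{A_2}\to n_{B_1}n_{B_2}}$ equals the supremum of
\[
S(v,w) := \sum_{n_{A_1},n_{A_2},n_{B_1},n_{B_2},n_C} h^{(1)}_{n_{A_1}n_{B_1}n_C} h^{(2)}_{n_{A_2}n_{B_2}n_C}\, v_{n_{A_1}n_{A_2}}\, w_{n_{B_1}n_{B_2}}
\]
over unit test vectors $v_{n_{A_1}n_{A_2}}$ and $w_{n_{B_1}n_{B_2}}$. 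So the task reduces to bounding $S(v,w)$ by the product of the two tensor norms on the right-hand side.

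The key step is to regroup the summation around the joint index $(n_{B_1},n_C,n_{A_2})$: introduce
\[
\alpha_{n_{B_1}n_Cn_{A_2}} := \sum_{n_{A_1}} h^{(1)}_{n_{A_1}n_{B_1}n_C}\, v_{n_{A_1}n_{A_2}}, \qquad
\beta_{n_{B_1}n_Cn_{A_2}} := \sum_{n_{B_2}} h^{(2)}_{n_{A_2}n_{B_2}n_C}\, w_{n_{B_1}n_{B_2}},
\]
so that $S(v,w)=\sum_{n_{B_1},n_C,n_{A_2}} \alpha_{n_{B_1}n_Cn_{A_2}}\beta_{n_{B_1}n_Cn_{A_2}}$. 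A single Cauchy--Schwarz yields $|S(v,w)|\le \|\alpha\|_{\ell^2}\|\beta\|_{\ell^2}$. The remaining work is to show that $\|\alpha\|_{\ell^2}^2\le \|h^{(1)}\|_{n_{A_1}\to n_{B_1}n_C}^2$ and $\|\beta\|_{\ell^2}^2\le \|h^{(2)}\|_{n_{A_2}n_C\to n_{B_2}}^2$.

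For $\alpha$, fix $n_{A_2}$ and view $(v_{n_{A_1}n_{A_2}})_{n_{A_1}}$ as a vector in $\ell^2_{n_{A_1}}$; the definition \eqref{eq: Tensor Norm} of $\|h^{(1)}\|_{n_{A_1}\to n_{B_1}n_C}$ gives
\[
\sum_{n_{B_1},n_C}\Big|\sum_{n_{A_1}} h^{(1)}_{n_{A_1}n_{B_1}n_C}v_{n_{A_1}n_{A_2}}\Big|^2 \le \|h^{(1)}\|_{n_{A_1}\to n_{B_1}n_C}^2 \sum_{n_{A_1}}|v_{n_{A_1}n_{A_2}}|^2,
\]
and summing in $n_{A_2}$ and using $\|v\|_{\ell^2}\le 1$ finishes the bound. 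For $\beta$, one does the symmetric manipulation (fixing $n_{B_1}$, regarding $(w_{n_{B_1}n_{B_2}})_{n_{B_2}}$ as a vector in $\ell^2_{n_{B_2}}$), which first produces $\|h^{(2)}\|_{n_{B_2}\to n_{A_2}n_C}$; invoking the duality identity $\|h^{(2)}\|_{n_{B_2}\to n_{A_2}n_C}=\|h^{(2)}\|_{n_{A_2}n_C\to n_{B_2}}$ gives the norm in the form stated in the lemma. Taking the sup over $v,w$ then yields the claim.

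The only delicate point is bookkeeping: one has to choose to pair $n_C$ with $n_{B_1}$ inside $\alpha$ and with $n_{A_2}$ inside $\beta$ (rather than some other grouping) so that the resulting tensor norms match exactly those appearing in the statement; an asymmetric grouping would produce a weaker factorization. Beyond this choice of grouping, the argument is a one-line Cauchy--Schwarz followed by two applications of the definition of the tensor norm.
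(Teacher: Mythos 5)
Your proof is correct. It differs from the paper's in how the two tensor norms are extracted. The paper works with the operator form of the norm: it fixes a single test vector $z_{n_{A_1}n_{A_2}}$, rewrites the inner sum so that the $h^{(2)}$ contraction acts on $\alpha_{n_{B_1}n_Cn_{A_2}} := \sum_{n_{A_1}} h^{(1)}_{n_{A_1}n_{B_1}n_C} z_{n_{A_1}n_{A_2}}$, applies the $\|h^{(2)}\|_{n_{A_2}n_C\to n_{B_2}}$ bound directly (for each fixed $n_{B_1}$), and then bounds the resulting $\|\alpha\|_{\ell^2}^2$ by $\|h^{(1)}\|_{n_{A_1}\to n_{B_1}n_C}^2\|z\|_{\ell^2}^2$; there is no explicit Cauchy--Schwarz. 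You instead dualize both directions (test vectors $v$ and $w$), regroup around $(n_{B_1},n_C,n_{A_2})$, and use a single Cauchy--Schwarz to split $\alpha$ from $\beta$, after which both tensor bounds are applied symmetrically, with a final duality step to rewrite $\|h^{(2)}\|_{n_{B_2}\to n_{A_2}n_C}$ in the form stated. The two arguments are close relatives: the paper's application of the $h^{(2)}$ operator bound is, via duality, exactly your Cauchy--Schwarz against $w$ in disguise. Your version makes the symmetry between $h^{(1)}$ and $h^{(2)}$ visible and isolates the single Cauchy--Schwarz as the pivot of the argument, at the modest cost of invoking the duality identity twice (once at the start, once for $\beta$). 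Either presentation is fine.
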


\begin{proof}
    Let $z:= z_{n_{A_1} n_{A_2}} \in (\mathbb{F}^d)^{A_1 \cup A_2}$ be arbitrary. By first applying the tensor estimate for $h^{(2)}$ from Definition~\ref{def: Tensor Norm}, we obtain
    \begin{align*}
         & \sum_{n_{B_1},n_{B_2}} \Bigg| \sum_{n_{A_1}, n_{A_2}, n_C} h^{(1)}_{n_{A_1} n_{B_1} n_{C}} h^{(2)}_{n_{A_2} n_{B_2} n_{C}} z_{n_{A_1}n_{A_2}}\Bigg| ^2 \\
          &= \sum_{n_{B_1}} \sum_{n_{B_2}} \Bigg| \sum_{n_{A_2}, n_C} h^{(2)}_{n_{A_2} n_{B_2} n_{C}} \bigg(\sum_{n_{A_1}} h^{(1)}_{n_{A_1} n_{B_1} n_{C}} z_{n_{A_1}n_{A_2}} \bigg) \Bigg| ^2 \\
          &\leq    \Big\| h^{(2)}_{n_{A_2} n_{B_2} n_{C}} \Big\|^2_{n_{A_2} n_C \rightarrow n_{B_2}} \bigg(\sum_{n_{A_2}, n_{B_1}, n_C} \Big| \sum_{n_{A_1}} h^{(1)}_{n_{A_1} n_{B_1} n_{C}} z_{n_{A_1}n_{A_2}} \Big|^2 \bigg).
    \end{align*}

    Now, using the tensor bound on $h^{(1)}$, it follows that 
    \begin{align*}
         &\sum _{n_{A_2}} \bigg( \sum_{n_{B_1}, n_C} \bigg| \sum_{n_{A_1}} h^{(1)}_{n_{A_1} n_{B_1} n_{C}} z_{n_{A_1}n_{A_2}} \bigg|^2 \bigg) \\
         \leq  &\Big\|h^{(1)}_{n_{A_1} n_{B_1} n_{C}}  \Big\|^2_{n_{A_1} \rightarrow n_{B_1}n_{C}} \bigg(\sum _{n_{A_2}} \sum _{n_{A_1}} |z_ {n_{A_1}n_{A_2}}|^2 \bigg).
    \end{align*}
By taking the supremum over all $z_{n_{A_1} n_{A_2}}$ such that 
$\sum_{n_{A_1},  n_{A_2}} \left| z_{n_{A_1} n_{A_2}} \right|^2  = 1$ and the square root of both sides, we end up with the desired bound.
\end{proof}

\subsection{Probabilistic  background} \label{subsection: Probabilistic Background}

The non-commutative Khintchine inequality is the key probabilistic tool in our proof of the Gaussian case (see Lemma~\ref{lemma: GaussianCase}). This inequality is broadly useful in random matrix theory because it allows us to upper-bound the expectation of the operator norm of a random matrix by the underlying deterministic covariance structure of the matrix, which is a computable quantity. 

\begin{lemma} [Non-commutative Khintchine inequality] \label{NoncommutativeKhintchine}
Let $X\in \mathbb{F}^{N\times N}$ be a matrix where the entries are jointly Gaussian and centered. Then $X$ can be written as follows: 
\begin{equation*}
    X = \sum_{k=1}^s g_nA_n,
\end{equation*}
where $(g_1, g_2, ..., g_n)$ are independent standard Gaussians and $A_1, A_2, ..., A_s$ are deterministic coefficient matrices.

Then, for all $p \geq 1$ , we have that
\begin{equation}
    \xp \left[ \left\|X \right\|_{\text{op}} ^{2p} \right]^{\frac{1}{2p}}
    \leq c\sqrt{p\log(N)} \max \bigg\{ \bigg\| \bigg(\sum_{k = 1}^s A_nA_n^*\bigg)  \bigg\|_{\text{op}}^{\frac{1}{2}} , \bigg\| \bigg(\sum_{k = 1}^s A_n^*A_n\bigg)  \bigg\|_{\text{op}}^{\frac{1}{2}} \bigg\},
 \end{equation}
for a constant $c$.
\end{lemma}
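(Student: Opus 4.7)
The plan is to prove this via the classical moment method, first reducing to the self-adjoint case by Hermitization. Replace $X$ by
\[
\tilde X = \begin{pmatrix} 0 & X \\ X^* & 0 \end{pmatrix} = \sum_{k=1}^s g_k \tilde A_k,
\qquad
\tilde A_k = \begin{pmatrix} 0 & A_k \\ A_k^* & 0 \end{pmatrix}.
\]
Then $\|\tilde X\|_{\text{op}} = \|X\|_{\text{op}}$, and a direct block computation gives $\|\sum_k \tilde A_k^2\|_{\text{op}} = \max\{\|\sum_k A_k A_k^*\|_{\text{op}}, \|\sum_k A_k^* A_k\|_{\text{op}}\}$. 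It therefore suffices to prove the inequality when $X$ is self-adjoint, with right-hand side $c\sqrt{p\log N}\,\|\sum_k A_k^2\|_{\text{op}}^{1/2}$.

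Next, for an integer $p \geq 1$, I would use the pointwise inequality $\|X\|_{\text{op}}^{2p} \leq \text{tr}(X^{2p})$ (valid since $X^{2p}$ is PSD) and expand using Isserlis' theorem for the Gaussians $g_k$:
\[
\xp\bigl[\text{tr}(X^{2p})\bigr] = \sum_{\pi} \sum_{k_1,\dots,k_{2p}} \text{tr}(A_{k_1}\cdots A_{k_{2p}}) \prod_{\{i,j\}\in\pi}\delta_{k_i k_j},
\]
summed over all perfect pairings $\pi$ of $\{1,\dots,2p\}$. Non-crossing pairings telescope into a product involving the "covariance" operator $\sum_k A_k^2$, each contributing at most $N\cdot \|\sum_k A_k^2\|_{\text{op}}^p$, while crossing pairings contribute strictly less via repeated Cauchy--Schwarz on the trace. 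Bounding the number of pairings by $(2p-1)!! \lesssim (Cp)^p$ yields
\[
\xp\bigl[\|X\|_{\text{op}}^{2p}\bigr] \leq N (Cp)^p \Big\|\sum_k A_k^2\Big\|_{\text{op}}^p.
\]
Taking $(2p)$-th roots, the factor $N^{1/(2p)}$ is controlled by choosing $p \gtrsim \log N$, producing the $\sqrt{p\log N}$ scaling; the intermediate range $1 \leq p \leq \log N$ follows by Jensen's inequality (monotonicity of $L^p$ norms), and general real $p \geq 1$ follows similarly.

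The main obstacle is the combinatorial control of crossing pairings: one must carefully argue that the dominant contribution comes only from non-crossing diagrams, whose count is the Catalan number $C_p \leq 4^p$ rather than the much larger total $(2p-1)!!$. A cleaner alternative would bypass combinatorics entirely via Lieb's concavity inequality and the matrix Laplace transform method, yielding the MGF estimate $\xp[\text{tr}\,e^{\lambda X}] \leq N\exp(\tfrac{1}{2}\lambda^2\|\sum_k A_k^2\|_{\text{op}})$; optimizing in $\lambda$ via Markov's inequality and integrating tail bounds into moments then gives the same operator-norm estimate without any pairings argument.
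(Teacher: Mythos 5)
Your overall strategy closely tracks the paper's own: the remark following the lemma reduces to the self-adjoint case (by citing \cite[Remark 2.4]{vH17}, i.e.\ Hermitization), invokes \cite[Theorem 3.2]{vH17} as a black box for the trace-moment bound, passes from $\mathrm{Tr}[X^{2p}]$ to $N\,\|X\|_{\text{op}}^{2p}$, and then splits into $2p \geq \log N$ and $2p < \log N$ (using H\"older in the latter case) to absorb the $N^{1/2p}$ factor. You follow the same reduction but additionally sketch a proof of the cited moment bound itself via Isserlis' theorem, and you offer a second alternative via Lieb's concavity and the matrix Laplace transform. So this is essentially the paper's route, pushed one level deeper; the Laplace-transform variant is genuinely different (it sidesteps pairings entirely) and also works, giving the same $\sqrt{p}+\sqrt{\log N} \lesssim \sqrt{p\log N}$ scaling after integrating the Gaussian tail bound into moments.

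One sentence of your moment-method sketch is a misconception worth flagging: it is \emph{not} the case that ``one must carefully argue that the dominant contribution comes only from non-crossing diagrams'' with Catalan count $4^p$. The argument behind \cite[Theorem 3.2]{vH17} bounds \emph{every} pairing's contribution uniformly by $\mathrm{Tr}\big[\big(\sum_k A_k^2\big)^p\big]$ (this is \cite[Lemma 3.4]{vH17}, proved by the iterated Cauchy--Schwarz on traces that you gesture at), and then multiplies by the full count $(2p-1)!!$. Taking $(2p)$-th roots of $(2p-1)!!$ is precisely where the $\sqrt{p}$ factor originates; if only the $4^p$ non-crossing pairings mattered, the $\sqrt{p}$ would disappear, which is false (already for commuting $A_k$ the high moments scale as $\sqrt{p}$). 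Note that your own displayed inequality $\xp\big[\|X\|_{\text{op}}^{2p}\big] \leq N(Cp)^p \big\|\sum_k A_k^2\big\|_{\text{op}}^p$ already uses all $(2p-1)!!$ pairings, so the Catalan sentence contradicts the bound you just wrote and should simply be deleted. With that removed, and modulo actually proving the uniform-pairing bound, the sketch is sound.
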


\begin{remark}
We omit the proof here; however, an elegant proof can be found in \cite[Theorem 3.2]{vH17}. To recover the non-commutative Khintchine inequality as stated above from \cite[Theorem 3.2]{vH17} we make a few remarks. First, we may assume, without loss of generality, that the coefficient matrices $A_n$ are all symmetric, since we can reduce to this case (see \cite[Remark 2.4]{vH17}). 
For a symmetric matric $X$, the statement of the non-commutative Khintchine inequality in \cite{vH17} is 
\begin{equation} \label{eq: khintchine1}
    \xp \left[ \text{Tr}\left[ X^{2p} \right] \right]^{\frac{1}{2p}} \leq \sqrt{2p-1} \text{Tr} \left[ \Big( \sum_{n=1}^sA_n^{2} \Big)^p \right] ^{\frac{1}{2p}}.
\end{equation}

To lower bound the LHS of \eqref{eq: khintchine1}, we note that the square symmetric matrix $X$ is diagonalizable, so 
\begin{equation} \label{eq: khintchine2}
    \xp \left[ \left\| X \right\|^{2p}_{\text{op}} \right]^{\frac{1}{2p}}  
    = \xp \left[ \left\| X^{2p} \right\|_{\text{op}} \right]^{\frac{1}{2p}} 
    \leq \xp \left[ \text{Tr}\left[ X ^{2p} \right] \right]^{\frac{1}{2p}}. 
\end{equation}

To upper-bound the RHS of \eqref{eq: khintchine1}, it holds that
\begin{align*}
    \sqrt{2p\!-\!1} \ \text{Tr}\!\left[ \left( \sum_{k=1}^s A_n^2 \right)^{\! p \ } \right]^{\frac{1}{2p}} \!\!\!
    &\leq \sqrt{2p\!-\!1}  \ \left[ N \cdot  \left\| \sum_{k=1}^s A_n^2 \right\|_{\text{op}}^{p \ } \right]^{\frac{1}{2p}}
    \\ &= \sqrt{2p\!-\!1} \ N^{\frac{1}{2p}} \max\left\{ \! \left\| \sum_{k=1}^s A_nA_n^* \right\|_{\text{op}} ^{\frac{1}{2}}\!\!\!, \left\| \sum_{k=1}^s A_n^*A_n \right\|_{\text{op}} ^{\frac{1}{2}} \right\},
\end{align*}
where the last inequality follows because every $A_n$ is symmetric and $A_n = A_n^*$. It just remains to show that, for all $p$ and $N$, the terms outside the maximum are bounded by $c\sqrt{p\log(N)}$ to satisfy \eqref{eq: khintchine2}. If $2p \geq \log(N)$, then $N^{\frac{1}{2p}} \leq N^{\frac{1}{\log(N)}}$ is bounded by a constant. Hence, we obtain the required bound.
If $2p < \log(N)$, by H\"older's inequality it follows that
\begin{equation*}
    \xp \left[ \|X\|_{\text{op}}^p \right]^{\frac{1}{p}} \leq \xp \left[ \| X\|_{\text{op}} ^{\log(N)}\right]^{\frac{1}{\log(N)}}.
\end{equation*}
Then, by applying the non-commutative Khintchine inequality to the RHS, we end up with $\sqrt{\log(N)} \cdot N^{\frac{1}{\log(N)}} \geq c\sqrt{\log(N)}$, as required.
\end{remark}

\section{Laguerre Polynomials} \label{section: Laguerre}
Recall that objective is to define the renormalization $:\prod_{j\in J} g_{n_j}^{\iota_j}:$ of the product $\prod_{j\in J} g_{n_j}^{\iota_j}$ that can detect and account for pairings. In the complex setting, it turns out to be useful to define the renormalization in terms of a Laguerre-type polynomial. For expository purposes, we also provide the construction in the real-valued case using Hermite polynomials at the end of this section. 
We need the renormalization to obtain a decoupling inequality, which is a known result for Hermite polynomials (see, for example, \cite[Section 2]{AG93}). 
We first introduce the notation. 

\begin{definition} \label{def: Laguerre notation}
Let $(g_n)_{n\in \ints^d}$ be a sequence of independent, standard, complex-valued Gaussians indexed by vectors in $\ints^d$. Let $J := \{1, 2, ..., k\}$ be the index set of $n_J = (n_1, n_2, ..., n_k) \in (\ints^d)^k$ and $\iota_J = (\iota_1, \iota_2, ..., \iota_k) \in \{-1, 1 \}^k$. For simplicity, we write $g_{n_J}^{\iota_J} : = (g_{n_1}^{\iota_1}, g_{n_2}^{\iota_2}, ..., g_{n_k}^{\iota_k})$. Since $n_J$ is an arbitrary vector in $(\ints^d)^k$, the entries are not necessarily distinct. In the inductive step, we need to separate one of the indices and so, for any $j \in J$, we define $n_{J\backslash j} := (n_1, ..., n_{j-1}, n_{j+1}, ..., n_k) \in (\ints^d)^{k-1}$ and, analogously, for $\iota_{J\backslash j}$.  
\end{definition}

Fix $n_J \in (\ints^d)^k$. Then, we define the two functions $\sigma_{n_J} :\ints^d\rightarrow \ints_{\geq 0}$ and $\mu_{n_J} : \ints^d\rightarrow \ints$, by
\begin{equation*}
    \sigma_{n_J}(n) = \left| \{j \in J \mid n_j = n\}\right|\quad \text{ and }\quad \mu_{\iota_J}(n) = \sum_{j\in J, n_j = n} \iota_j
\end{equation*}
where $n_j$ and $\iota_j$ are the $j$th coordinates of the vectors $n_J$ and $\iota_J$, respectively. For each $n$, the function $\sigma$ counts the number of times $g_n^{\pm 1}$ appears in the product. The function $\mu$ is the signed difference between the number of $g_n$'s and $\overline{g_n}$'s in the product. Lastly, we let $\sgn (\mu_{n_J}(n)) \in \{-1, 1 \} $ denote the sign of $\mu$. We will consider the $\mu =0$ case separately.

\begin{definition} [Standard Laguerre Polynomials]
Since we will require the generalized (associated) Laguerre polynomials, we provide the recursive construction. For every $\alpha \geq 0$, the first two Laguerre polynomials are
\begin{align*}
    L_0^{\alpha}(x) = 1 \quad \text{and} \quad  L_1^{\alpha}(x) = 1 + \alpha -x.
\end{align*}
For $k \geq 2$, the Laguerre polynomials are given by the recurrence relation
\begin{align*}
    L_{k+1}^{\alpha}(x) = \frac{(2k+1+\alpha-x) L_k^{\alpha} - (k+\alpha) L_{k-1}^{\alpha}(x)}{k+1}.
\end{align*}

We often care about the simple Laguerre polynomials, which are the $\alpha =0$ case. For the reader's reference, we provide the first few simple Laguerre polynomials
\begin{align*}
    &L_0(x) = 1,  \qquad \quad L_1(x) = -x + 1, \qquad \quad L_2(x) = \tfrac{1}{2} (x^2 - 4x +2), \\
    &L_3(x) = \tfrac{1}{3!} (-x^3 + 9x^2 - 18x + 6),  \qquad \quad L_4(x) = \tfrac{1}{4!} (x^4 - 16x^3 + 72x^2 - 96x +24).
\end{align*}
\end{definition}

For further reference on Laguerre polynomials, see \cite[Section 13.2]{MR05}. We now construct our Laguerre-type polynomials, which are inspired by the use of Laguerre polynomials in the work of Oh-Thomann \cite{OT18}. 
\begin{definition} [Laguerre-type polynomials] \label{def: Laguerre type polynomials}
    Suppose $\sigma \in \ints_{>0}$ and $\mu \in \ints_{\geq 0}$, where $\sigma \geq \mu$, then we define
\begin{equation*}
    \mathcal{L}(\sigma, \mu, g_n)  = (-1)^{\frac{\sigma-|\mu|}{2}} \left(\tfrac{\sigma-|\mu|}{2} \right)!\ L_{\tfrac{\sigma-|\mu|}{2}}^{|\mu|}(|g_n|^2) g_n^{\mu}.
\end{equation*}
In what follows, $\frac{\sigma-|\mu|}{2}$ will be an integer.
Let $(g_n)_{n\in \ints ^d}$ be as given in Definition~\ref{def: Laguerre notation}. Fix some $n_J = (n_1, \dots, n_k) \in (\ints^d)^k$ and $\iota_J = (\iota_1, \dots, \iota_k) \in \{1, -1\}^k$, then define 
\begin{equation*}
     \mathcal{L}(g_{n_J}^{\iota_J}) := \prod_{n\in \ints^d} \mathcal{L}(\sigma_{n_J}(n), \mu_{n_J}(n), g_n).
\end{equation*}

Although it appears that we are handling an infinite product, all but finitely many terms are equal to $1$. Indeed, for all $n \in \ints^d$ where $g_n$ does not appear in the product and, hence, $\sigma_{n_J}(n) =0$, the polynomial $\mathcal{L}(\sigma_{n_J}(n),\mu_{n_J}(n), g_n) = 1$. 
\end{definition}

We first discuss a few examples, then we will show that this polynomial satisfies the required properties.

\begin{example} \label{ex: Laguerre Example 1}
 Let $g_n$ be a standard complex-valued Gaussian. Consider the product $g_n^{\alpha}\overline{g_n}^{\beta}$, where $\alpha \geq \beta$.
The renormalization is given by 
\begin{equation*} 
    \mathcal{L}(\alpha + \beta, \alpha - \beta, g_n) =(-1)^\beta \beta! L^{\alpha -\beta}_{\beta}(|g_n|^2) g_n^{(\alpha -\beta)}.
\end{equation*}
Let $\sigma$ and $\mu$ be as in Definition~\ref{def: Laguerre type polynomials}. We can check that, when $\alpha \geq \beta$, $\frac{\sigma + \mu}{2}$ equals the number of times $g_n$ appears in the product and $\frac{\sigma - \mu}{2}$ is the number of times $\overline{g_n}$ appears in the product. We will use this observation in Lemma~\ref{lemma: Laguerre-type partial derivatives}. 
\end{example}

\begin{example}
    We list the three possible polynomials for the normalization of the product $g_n^{\iota_1}g_n^{\iota_2}g_n^{\iota_3}g_n^{\iota_4}$, depending on the set of signs $\{\iota_1, \iota_2, \iota_3, \iota_4 \} \in \{-1,+1\}^4$.
    \begin{enumerate}
        \item $g_n^4$ is normalized to $\el(4,4,g_n) = g_n^4$.
        \item $g_n^3\overline{g_n}$ is normalized to $\el(4,2,g_n) = g_n^3\overline{g_n} - 3g_n^2$.
        \item $g_n^2\overline{g_n}^2$ is normalized to $\el(4,0,g_n) = g_n^2\overline{g_n}^2 - 4g_n\overline{g_n}+2$.
    \end{enumerate}
    Since Gaussians are rotation invariant, $g_n^3\overline{g_n}$ and $g_n\overline{g_n}^3$ have the same normalization (up to conjugation). We make a few remarks that made evident here. First, the polynomials indeed have a monomial equal to the product $g_n^{\iota_1}g_n^{\iota_2}g_n^{\iota_3}g_n^{\iota_4}$ that we are renormalizes. Second, $\sigma$ denotes the degree of the polynomial $\mathcal{L}(\sigma, \mu, g_n)$ and this is precisely equal to the number of Gaussians in the product. 
\end{example}

In the following lemma we will check that $ \mathcal{L}(g_{n_J}^{\iota_J})$ is indeed the renormalization.

\begin{lemma} [Expectation of the Laguerre-type polynomials] \label{lem: Laguerre-type expectation}
In the present setting, for all $n_J \in (\ints^d)^k$ and $\iota_J \in \{1, -1\}^k$, it holds that 
    \begin{equation*}
        \mathbb{E} \left[\mathcal{L}(g_{n_J}^{\iota_J})\right] = 0.
    \end{equation*}
\end{lemma}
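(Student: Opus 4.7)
The plan is to use independence of the $g_n$'s to factor the expectation over distinct $n \in \ints^d$, and then show that each nontrivial single-site factor vanishes on its own. By the construction of $\el(g_{n_J}^{\iota_J})$ we have
\begin{equation*}
\el(g_{n_J}^{\iota_J}) \;=\; \prod_{n \in \ints^d} \el\!\left(\sigma_{n_J}(n),\, \mu_{n_J}(n),\, g_n\right),
\end{equation*}
and since each factor depends on a single $g_n$ (and pairwise independence is equivalent to joint independence for a Gaussian family), the expectation factorizes as
\begin{equation*}
\xp\!\left[\el(g_{n_J}^{\iota_J})\right] \;=\; \prod_{n \in \ints^d} \xp\!\left[\el\!\left(\sigma_{n_J}(n),\, \mu_{n_J}(n),\, g_n\right)\right].
\end{equation*}
Factors at $n$ with $\sigma_{n_J}(n) = 0$ equal $1$, and since $k \geq 1$ at least one $n$ has $\sigma_{n_J}(n) \geq 1$. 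It therefore suffices to prove the single-variable statement $\xp[\el(\sigma, \mu, g)] = 0$ whenever $\sigma \geq 1$, where $g$ is a standard complex Gaussian.

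I would then split into two cases. When $\mu \neq 0$, rotational invariance alone suffices: writing $g = r e^{i\theta}$ with $r$ and $\theta$ independent and $\theta$ uniform on $[0,2\pi)$, the factor $g^\mu$ carries the phase $e^{i\mu\theta}$ while $L^{|\mu|}_{(\sigma-|\mu|)/2}(|g|^2)$ depends only on $r$; the angular integral $\xp[e^{i\mu\theta}] = 0$ then annihilates the expectation.

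The substantive case is $\mu = 0$. Here the signs $\iota_j$ with $n_j = n$ cancel in pairs, so $\sigma$ is automatically even and $m := \sigma/2 \geq 1$ is a positive integer; the factor reduces to $\el(\sigma, 0, g) = (-1)^m\, m!\, L_m(|g|^2)$, where $L_m = L_m^0$ is the simple Laguerre polynomial. Since $|g|^2$ is exponentially distributed with density $e^{-x}$ on $[0, \infty)$ (which one can double check against Example~3 in the previous subsection, where $\xp[|g|^4 - 4|g|^2 + 2] = 2 - 4 + 2 = 0$), the classical orthogonality relation
\begin{equation*}
\int_0^\infty L_m(x)\, L_0(x)\, e^{-x}\, dx \;=\; \delta_{m,0}
\end{equation*}
(with $L_0 \equiv 1$) yields $\xp[L_m(|g|^2)] = 0$ for $m \geq 1$. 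I expect this $\mu = 0$ subcase to be the conceptual crux of the lemma, since it is where the specific choice of Laguerre polynomials is essential---with any other polynomial of $|g|^2$ the expectation would generically fail to vanish. The $\mu \neq 0$ subcase, by contrast, comes essentially for free from the rotational symmetry of the complex Gaussian.
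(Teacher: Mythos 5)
Your proof is correct and follows essentially the same route as the paper: factor by independence, use Laguerre orthogonality against the exponential density of $|g_n|^2$ for the $\mu = 0$ factors, and kill the $\mu \neq 0$ factors by symmetry. The only cosmetic difference is in the $\mu \neq 0$ case, where you invoke the polar decomposition $g = re^{i\theta}$ directly, whereas the paper argues monomial by monomial that each term has an unbalanced count of $g_n$'s and $\overline{g_n}$'s --- both are the same rotational-invariance fact, just packaged differently.
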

\begin{proof}
    For distinct $n\in \ints^d$, the Gaussians $g_n$ are independent. Hence, 
    \begin{equation*}
    \xp \big[\mathcal{L}(g_{n_J}^{\iota_J} ) \big] = \xp \bigg[ \ \prod_{n\in \ints^d} \mathcal{L}(\sigma_{n_J}(n), \mu_{n_J}(n), g_n) \bigg]
    =  \prod_{n \in \ints^d} \xp \left[ \mathcal{L}(\sigma_{n_J}(n), \mu_{n_J}(n), g_n) \right].
\end{equation*}
   Suppose first that $\mu_{n_J}(n)\neq 0$. Referring back to the construction in Definition~\ref{def: Laguerre type polynomials}, observe that $\mathcal{L}(\sigma_{n_J}(n), \mu_{n_J}(n), g_n)$ is composed of a polynomial in terms of $|g_n|^2$ and $g_n^{\mu}$. Therefore, every monomial in $\mathcal{L}(\sigma_{n_J}(n), \mu_{n_J}(n), g_n)$ has an asymmetric number of $g_n$'s and $\overline{g_n}$'s and consequently has expectation zero, as required. Now suppose that $\mu_{n_J}(n) = 0$, then
    $$\mathcal{L}(\sigma_{n_J}(n), \mu_{n_J}(n), g_n) = (-1)^{\frac{\sigma_{n_J}(n)}{2}} \left(\tfrac{\sigma_{n_J}(n)}{2} \right) L_{\tfrac{\sigma_{n_J}(n)}{2}}(|g_n|^2).$$
    The random variable $|g_n|^2$ has probability density $e^{-x}$ for $x \geq 0$. By the orthogonality of the Laguerre polynomials with respect to the inner product $\int_0^\infty f(x)g(x)e^{-x}dx$ (see \cite[Section 13.2]{MR05} for details), for any $\sigma \geq 1$, it follows that 
    \begin{align*}
        \xp[L_{\sigma}(|g_n|^2)] &=  \xp[L_{\sigma}(|g_n|^2) \cdot L_0(|g_n|^2)] 
        = \int L_{\sigma}(x) \cdot L_0(x) e^{-x} dx
        =0,
    \end{align*}
    concluding the proof.
\end{proof}

In probabilistic decoupling, we will need to interpolate between two independent families of Gaussians. The following lemma will allow us to set up the inductive argument, using independence of $(\partial_\varphi (g_n(\varphi)))_{n\in \ints^d}$ and $(g_n(\varphi))_{n\in \ints^d}$. For a more precise explanation see Lemma~\ref{lemma: Probabilistic Decoupling}.

\begin{lemma}[Partial derivatives of the Laguerre-type polynomials] \label{lemma: Laguerre-type partial derivatives}
    With the same assumptions as above and $k \geq 1$, let $(\tilde{g}_n)_{n\in \ints^d}$ be an independent copy of $(g_n)_{n\in \ints^d}$. Then for any $\varphi \in [0, \frac{\pi}{2}]$, define $g_n(\varphi):= sin(\varphi)g_n + cos(\varphi)\tilde{g}_n$. We use $g_{n_J}^{\iota_J}(\varphi)$ to denote $(g_{n_1}^{\iota_1}(\varphi), ..., g_{n_k}^{\iota_k}(\varphi))$. Then 
    \begin{align*}
        & \partial_\varphi \left[ \mathcal{L}(g_{n_J}^{\iota_J} (\varphi)) \right]  \ = \sum_{j\in J} \partial_\varphi \big[ g_{n_j}^{\iota_j}(\varphi) \big]\mathcal{L}(g_{n_{J \setminus j}}^{\iota_{J \setminus j}}(\varphi)).
    \end{align*}
\end{lemma}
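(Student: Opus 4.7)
The plan is to exploit the site-wise product structure
\begin{equation*}
\mathcal{L}(g_{n_J}^{\iota_J}(\varphi)) = \prod_{n \in \ints^d} \mathcal{L}(\sigma_{n_J}(n), \mu_{n_J}(n), g_n(\varphi))
\end{equation*}
from Definition~\ref{def: Laguerre type polynomials}. Since all but finitely many factors on the right are equal to $1$, the product is effectively finite and the Leibniz rule yields
\begin{equation*}
\partial_\varphi \bigl[\mathcal{L}(g_{n_J}^{\iota_J}(\varphi))\bigr] = \sum_{n \in \ints^d} \partial_\varphi \bigl[\mathcal{L}(\sigma_{n_J}(n), \mu_{n_J}(n), g_n(\varphi))\bigr] \prod_{m \neq n} \mathcal{L}(\sigma_{n_J}(m), \mu_{n_J}(m), g_m(\varphi)),
\end{equation*}
where only terms with $\sigma_{n_J}(n) \geq 1$ (i.e., $n \in \{n_j : j \in J\}$) contribute. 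It therefore suffices to establish the single-site identity
\begin{equation*}
\partial_\varphi \bigl[\mathcal{L}(\sigma_{n_J}(n), \mu_{n_J}(n), g_n(\varphi))\bigr] = \sum_{\substack{j \in J \\ n_j = n}} \partial_\varphi[g_n^{\iota_j}(\varphi)] \cdot \mathcal{L}(\sigma_{n_J}(n) - 1, \mu_{n_J}(n) - \iota_j, g_n(\varphi)).
\end{equation*}
Given this, erasing an index $j$ with $n_j = n$ decreases $\sigma$ by $1$ and $\mu$ by $\iota_j$ only at the site $n$ and leaves the other sites fixed, so the product $\mathcal{L}(\sigma_{n_J}(n) - 1, \mu_{n_J}(n) - \iota_j, g_n(\varphi)) \prod_{m \neq n} \mathcal{L}(\sigma_{n_J}(m), \mu_{n_J}(m), g_m(\varphi))$ collapses exactly to $\mathcal{L}(g_{n_{J \setminus j}}^{\iota_{J \setminus j}}(\varphi))$. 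Swapping the order of summation converts $\sum_n \sum_{j : n_j = n}$ into $\sum_{j \in J}$, giving the claim.

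To prove the single-site identity, I would apply Wirtinger calculus to get
\begin{equation*}
\partial_\varphi \bigl[\mathcal{L}(\sigma, \mu, g_n(\varphi))\bigr] = \partial_\varphi g_n(\varphi) \cdot \partial_{g_n} \mathcal{L}(\sigma, \mu, g_n) + \partial_\varphi \overline{g_n(\varphi)} \cdot \partial_{\overline{g_n}} \mathcal{L}(\sigma, \mu, g_n),
\end{equation*}
and match this with the RHS of the single-site identity after separating the sum over $j : n_j = n$ according to the sign $\iota_j$. Writing $a = |\{j \in J : n_j = n, \iota_j = +1\}|$ and $b = |\{j \in J : n_j = n, \iota_j = -1\}|$, this reduces to the two ``creation'' relations
\begin{equation*}
\partial_{g_n} \mathcal{L}(a+b, a-b, g_n) = a \cdot \mathcal{L}(a+b-1, a-b-1, g_n), \qquad \partial_{\overline{g_n}} \mathcal{L}(a+b, a-b, g_n) = b \cdot \mathcal{L}(a+b-1, a-b+1, g_n).
\end{equation*}
Plugging in the explicit formula $\mathcal{L}(a+b, a-b, g_n) = (-1)^b b! \, L_b^{a-b}(|g_n|^2) g_n^{a-b}$ (valid for $a \geq b$) and using $\partial_{g_n} |g_n|^2 = \overline{g_n}$, $\partial_{\overline{g_n}} |g_n|^2 = g_n$, both identities reduce to the classical generalized Laguerre relations
\begin{equation*}
x (L_m^\alpha)'(x) = m L_m^\alpha(x) - (m + \alpha) L_{m-1}^\alpha(x), \qquad L_m^{\alpha - 1}(x) = L_m^\alpha(x) - L_{m-1}^\alpha(x), \qquad (L_m^\alpha)'(x) = -L_{m-1}^{\alpha+1}(x),
\end{equation*}
all of which can be found in \cite[Section 13.2]{MR05}.

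The main technical obstacle is the bookkeeping across the three subcases $a > b$, $a = b$, and $a < b$, since Definition~\ref{def: Laguerre type polynomials} only writes out $\mathcal{L}(\sigma, \mu, g_n)$ for $\mu \geq 0$; for $\mu < 0$ I would adopt the conjugation convention $\mathcal{L}(\sigma, -|\mu|, g_n) := \overline{\mathcal{L}(\sigma, |\mu|, g_n)}$, which simply swaps the roles of $g_n$ and $\overline{g_n}$ and correspondingly swaps the two creation relations. In the degenerate case $a = b$ (so $\mu = 0$) one uses the simple Laguerre polynomials together with $L_b'(x) = -L_{b-1}^1(x)$ to match both sides. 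In each subcase the verification is a one-line application of one of the three classical Laguerre identities above, so once the correct substitution is set up there is no further difficulty.
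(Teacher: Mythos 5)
Your proof is correct and follows essentially the same route as the paper: apply the Leibniz rule to the finite product over sites, reduce to a single-site identity, compute Wirtinger derivatives in $g_n$ and $\overline{g_n}$, and match the resulting coefficients with the counting coefficients by appealing to classical generalized Laguerre recurrences. The identities you cite are equivalent to the two derivative formulas the paper invokes (indeed, combining $x(L_m^\alpha)'=mL_m^\alpha-(m+\alpha)L_{m-1}^\alpha$ with $L_m^{\alpha-1}=L_m^\alpha-L_{m-1}^\alpha$ is precisely the paper's $\frac{d}{dx}\bigl[x^\alpha L_m^\alpha\bigr]=(m+\alpha)x^{\alpha-1}L_m^{\alpha-1}$). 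Where your write-up adds something is in handling the sign cases: the paper folds the $\mu<0$ case into the $\sgn(\mu)$ notation without stating a convention for $g_n^\mu$ when $\mu<0$ and dismisses $\mu=0$ as a ``technicality,'' whereas you make the conjugation convention $\mathcal{L}(\sigma,-|\mu|,g_n)=\overline{\mathcal{L}(\sigma,|\mu|,g_n)}$ explicit and isolate the $a=b$ case with $L_b'=-L_{b-1}^1$, which is cleaner bookkeeping even if the underlying computation is the same.
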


\begin{proof} 
We will require the following two properties about the derivatives of generalized Laguerre polynomials:
\begin{equation} \label{eq: Laguerre derivative property (1)}
        \frac{d}{dx}L_m^{(\alpha)}(x) = (-1) L_{m-1}^{\alpha+1}(x)
\end{equation}
and 
\begin{equation} \label{eq: Laguerre derivative property (2)}
    \frac{d}{dx}x^{\alpha}L_m^{(\alpha)}(x) = (m+\alpha) x^{\alpha -1} L_m^{(\alpha-1)}(x) .
\end{equation}
For further reading on the properties of Laguerre polynomials see \cite[Section 13.2]{MR05}.
By repeatedly applying product rule and chain rule, we obtain that 
\begin{align}
      & \nonumber \partial_\varphi  \mathcal{L}(g_{n_J}^{\iota_J}(\varphi))=  \partial_\varphi \prod_{n \in \ints^d}\mathcal{L}(\sigma_{n_J}(n), \mu_{n_J}(n), g_n(\varphi)) \\
    &=  \nonumber \sum_{m\in \ints^d} \partial_\varphi[\mathcal{L}(\sigma_{n_J}(m), \mu_{n_J}(m), g_m(\varphi))] \prod_{n \in \ints^d, n \neq m}\mathcal{L}(\sigma_{n_J}(n), \mu_{n_J}(n), g_n(\varphi)).
\end{align}
 The polynomial $\mathcal{L} (\sigma_{n_J}(m), \mu_{n_J}(m), g_m(\varphi)) = 1$ when $\sigma_{n_J} (m) = 0$ and vanishes under taking the partial derivative. Therefore, the only nonzero terms of the sum are those where $m = n_j$ for some $j\in J$.

For simplicity, we denote $\sgn(\mu_{n_J}(m))$ by $\sgn(\mu)$. We first evaluate the partial derivative with respect to $g_m^{\sgn(\mu)}$. We may assume there exists at least one index $j \in J$ such that $n_j = m$ and $\iota_j = \sgn(\mu)$. By expanding $g_m^{\sgn(\mu)} = |g_m|^2 / g_m^{-\sgn(\mu)}$ and viewing $g_m^{-\sgn(\mu)}$ as constant, we may directly apply \eqref{eq: Laguerre derivative property (2)} as follows
\begin{align*}
    & \partial_{g_m^{\sgn(\mu)}} \bigg[  (-1)^{\frac{\sigma-|\mu|}{2}} \Big(\tfrac{\sigma-|\mu|}{2} \Big)!\  L_{\frac{\sigma-|\mu|}{2}}^{|\mu|}(|g_m|^2) \  g_m^{\mu} \bigg] \\
    &=  \tfrac{\sigma+|\mu|}{2} (-1)^{\frac{\sigma-|\mu|}{2}} \Big(\tfrac{\sigma-|\mu|}{2}\Big)!\   L_{\frac{(\sigma-1)-(|\mu|-1)}{2}}^{|\mu|-1}(|g_m|^2) \  g_m^{(\mu-\sgn(\mu))} \\ 
    &= \tfrac{\sigma+|\mu|}{2}\  \mathcal{L}(\sigma_{n_J}(m)-1, \mu_{n_J}(m)-\sgn(\mu), g_m(\varphi)) \\
    &= \tfrac{\sigma+|\mu|}{2}\   \mathcal{L}(\sigma_{n_{J\setminus j}}(m), \mu_{n_{J\setminus j}}(m), g_m(\varphi)).
\end{align*}
Recall from Example~\ref{ex: Laguerre Example 1} that $\tfrac{\sigma+|\mu|}{2}$ is precisely equal to the number of distinct $j \in J$, such that $n_j = m$ and $\iota_j = \sgn(\mu)$. Equivalently, this is the number of times $g_m^{\sgn(\mu)}$ appears in the product.

We now evaluate the partial derivative with respect to $g_m^{-\sgn(\mu)}$. Just as above, we may assume there is an index $j' \in J$ such that $n_{j'} = m$ and $\iota_{j'} = -\sgn(\mu)$. Viewing $g_m^{\sgn(\mu)}$ as a constant, we apply chain rule and \eqref{eq: Laguerre derivative property (1)} as follows
\begin{align*}
    &\partial_{g_m^{-\sgn(\mu)}} \bigg[ (-1)^{\frac{\sigma-|\mu|}{2}} \Big(\tfrac{\sigma-|\mu|}{2} \Big)!\  L_{\frac{\sigma-|\mu|}{2}}^{|\mu|}(|g_m|^2) \  g_m^{\mu} \bigg] \\
    &=  \tfrac{\sigma-|\mu|}{2} (-1)^{\frac{\sigma-|\mu|-1}{2}} \Big(\tfrac{\sigma-|\mu|-1}{2}\Big)!\   L_{\frac{(\sigma-1)-(|\mu|+1)}{2}}^{|\mu|+1}(|g_m|^2) \ g_m^{(\mu+\sgn(\mu))}  \\ 
    &= \tfrac{\sigma-|\mu|}{2} \ \mathcal{L}(\sigma_{n_{J\setminus {j'}}}(m), \mu_{n_{J\setminus {j'}}}(m), g_m(\varphi)).
\end{align*}
Observe that $\tfrac{\sigma-|\mu|}{2}$ is precisely the number of times that $g_m^{-\sgn(\mu)}$ appears in the product. There is one technicality concerning $\mu_{n_J}(m)=0$. One can check that by direct application of \eqref{eq: Laguerre derivative property (1)} and chain rule (as done above) that we obtain the desired equation.
Moreover, whenever $n \neq n_j$, it follows that $$\sigma_{n_J}(n) = \sigma_{n_{J\setminus j}}(n) \quad \text{and} \quad  \mu_{\iota_J}(n) = \mu_{\iota_{J\setminus j}}(n).$$
Combining the above, it holds that
\begin{align*}
    &\sum_{m\in \ints^d} \partial_\varphi\big[\mathcal{L}(\sigma_{n_J}(m), \mu_{n_J}(m), g_m(\varphi)) \big] \prod_{n \in \ints^d, n \neq m}\mathcal{L}(\sigma_{n_J}(n), \mu_{n_J}(n), g_n(\varphi)) \\
    &= \sum_{j \in J} \partial_\varphi(g_{n_j}^{\iota_j})\mathcal{L}(\sigma_{n_{J \setminus j}}({n_j}), \mu_{n_{J \setminus j}}({n_j}), g_{n_j}(\varphi)) \prod_{n \in \ints^d, n \neq n_j} \!\!\! \mathcal{L}(\sigma_{n_{J \setminus j}}(n), \mu_{n_{J \setminus j}}(n), g_n(\varphi)).
\end{align*}
The desired bound follows directly by recalling the definition of $\mathcal{L}(g_{n_{J \setminus j}}^{\iota_{J \setminus j}}(\varphi))$. 

\end{proof}

We conclude this section by providing the construction for the polynomial that renormalizes the product of real-valued Gaussians. We will only state and prove the abstract random tensor estimate for complex-valued Gaussians, but the analogous statement for real-valued Gaussians follows by an almost identical argument.
\begin{definition} [\textbf{Hermite Polynomials}]\label{def: Hermite Polynomials}
    Let $H_n(x)$ denote the $n$th single variable Hermite polynomial. Set $H_0 = 1$. We recursively construct $H_n$, for $n\geq 1$ by requiring that
\begin{enumerate} 
    \item [(i)] $\frac{d}{dx}H_n(x) = nH_{n-1}(x)$ and
    \item [(ii)] $\xp \left[ H_n(X) \right] = 0 $, when $X$ is a standard real-valued Gaussian.
\end{enumerate}

This uniquely defines the $n$th Hermite polynomial, because (i) determines the terms of degree $\geq 1$ and (ii) determines the constant terms.
We can use this to compute the first few single variable Hermite polynomials:
\begin{align*}
    & H_1(x) = x, \quad \ \  H_2(x) = x^2 -1, \quad \ \  H_3(x) = x^3 - 3x, \quad \ \   H_4(x) = x^4 - 6x^2  + 3.
\end{align*}
For reference on the Hermite polynomials, see \cite[Section 2]{H21}. 
\end{definition}

\begin{definition} [\textbf{Hermite-type Polynomials}] 
    With the same notation as above. Let $(g_n)_{n \in \ints^d}$ be a sequence of pairwise independent, real-valued Gaussians and fix some $n_J = (n_1, n_2, ..., n_k) \in (\ints^d)^k$. Then, the  renormalization $:\prod_{j\in J} g_{n_j}:$ of the product $\prod_{j\in J} g_{n_j}$ is given by 
    \begin{align*}
     H(g_{n_J}) :=  \prod_{n\in \ints^d} H_{\sigma_{n_J}(n)}(g_n).
\end{align*}
\end{definition}

\begin{lemma} [Properties of the Hermite-type Polynomials]
Moreover, the Hermite-type polynomials satisfy the following two properties:
\begin{enumerate}
    \item [(i)] $\xp \left[ H(g_{n_J}) \right] = 0$,
    \item [(ii)] With $g(\varphi)$ as defined in Lemma~\ref{lemma: Laguerre-type partial derivatives}, the partial derivative with respect to $\varphi$ is given by $$\partial_\varphi H(g_{n_J}) = \sum_{j\in J} \partial(g_{n_j})H(g_{n_{J \setminus j}}(\varphi)).$$
\end{enumerate}
\end{lemma}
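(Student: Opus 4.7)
The proof proceeds in direct analogy with the corresponding statements for the Laguerre-type polynomials (Lemma~\ref{lem: Laguerre-type expectation} and Lemma~\ref{lemma: Laguerre-type partial derivatives}), but is technically much simpler because the single-variable Hermite polynomials are defined precisely so that properties (i) and (ii) hold.

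For property (i), the plan is to factor the expectation using independence. Since $(g_n)_{n \in \ints^d}$ is a family of pairwise independent real-valued Gaussians, and $H(g_{n_J}) = \prod_{n\in \ints^d} H_{\sigma_{n_J}(n)}(g_n)$ is a product of functions of these independent variables, we have
\begin{equation*}
    \xp\!\left[ H(g_{n_J}) \right] = \prod_{n\in \ints^d} \xp\!\left[ H_{\sigma_{n_J}(n)}(g_n) \right].
\end{equation*}
Provided $J$ is nonempty, there exists at least one $n^{*} \in \ints^d$ with $\sigma_{n_J}(n^*) \geq 1$. For that $n^{*}$, property (ii) in Definition~\ref{def: Hermite Polynomials} gives $\xp[H_{\sigma_{n_J}(n^*)}(g_{n^{*}})]=0$. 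For every other $n$ with $\sigma_{n_J}(n)=0$, the factor is simply $H_0(g_n)=1$. Hence the product vanishes.

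For property (ii), I would apply the product rule followed by the chain rule. Writing $g_n(\varphi) = \sin(\varphi) g_n + \cos(\varphi) \tilde{g}_n$, the product rule yields
\begin{equation*}
    \partial_\varphi H(g_{n_J}(\varphi)) = \sum_{m \in \ints^d} \partial_\varphi\!\left[ H_{\sigma_{n_J}(m)}(g_m(\varphi)) \right] \prod_{n \neq m} H_{\sigma_{n_J}(n)}(g_n(\varphi)).
\end{equation*}
Terms with $\sigma_{n_J}(m) = 0$ vanish, so only $m$ in the image of $n_J$ contribute. For such $m$, the chain rule combined with property (i) of Definition~\ref{def: Hermite Polynomials} gives
\begin{equation*}
    \partial_\varphi H_{\sigma_{n_J}(m)}(g_m(\varphi)) = \sigma_{n_J}(m) \cdot H_{\sigma_{n_J}(m)-1}(g_m(\varphi)) \cdot \partial_\varphi g_m(\varphi).
\end{equation*}

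The final step is a re-indexing: the sum over $m \in \ints^d$ is converted into a sum over $j \in J$. The combinatorial identity $\sigma_{n_J}(m) = |\{ j \in J : n_j = m \}|$ means that the factor $\sigma_{n_J}(m)$ records exactly the number of $j \in J$ producing the site $m$. Moreover, for any such $j$ with $n_j = m$, one has $\sigma_{n_{J \setminus j}}(m) = \sigma_{n_J}(m) - 1$ and $\sigma_{n_{J \setminus j}}(n) = \sigma_{n_J}(n)$ for $n \neq m$. Substituting these identities shows that each contribution from the site $m$ is obtained by summing $\partial_\varphi(g_{n_j}) H(g_{n_{J \setminus j}}(\varphi))$ over those $j \in J$ with $n_j = m$, and collecting over all $m$ yields
\begin{equation*}
    \partial_\varphi H(g_{n_J}(\varphi)) = \sum_{j \in J} \partial_\varphi (g_{n_j}) \, H(g_{n_{J \setminus j}}(\varphi)),
\end{equation*}
as claimed. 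No real obstacle is expected: property (i) follows from Hermite orthogonality built into the definition, and property (ii) is a direct product-rule computation where the only bookkeeping is the re-indexing from sites $m$ to indices $j$.
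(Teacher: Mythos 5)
Your proposal is correct and follows essentially the same route the paper sketches: independence plus Definition~\ref{def: Hermite Polynomials}(ii) for part (i), and product rule, chain rule, and Definition~\ref{def: Hermite Polynomials}(i) together with the re-indexing from sites $m \in \ints^d$ to indices $j \in J$ for part (ii). The one spot worth making fully explicit is the re-indexing: for a fixed $m$ with $\sigma_{n_J}(m)\ge1$, the factor $H(g_{n_{J\setminus j}}(\varphi))$ is the same for every $j$ with $n_j=m$ (since deleting any such $j$ changes the multiplicity profile identically), so the multiplicity $\sigma_{n_J}(m)$ exactly absorbs the count of such $j$; you state this correctly, and it matches the paper's remark.
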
 
\begin{remark}
While we omit a full proof, we provide a sketch below.
The proof of (i) follows from the independence assumption on the sequence $(g_n)_{z\in \ints ^d}$ and property (ii) in Definition~\ref{def: Hermite Polynomials}. The proof of (ii) follows by using chain rule and product rule as in the proof of Lemma~\ref{lemma: Laguerre-type partial derivatives}. Then, by using property (i) in Definition~\ref{def: Hermite Polynomials}, if $m = n_j$ for some $j\in J$, we observe that 
$$\partial_{g_{m}}H_{\sigma_{n_J}(m)}(g_{m}) = \sigma_{n_J}(m) \cdot H_{\sigma_{n_J}(m)-1}(g_{m}) =\sigma_{n_J}(m) \cdot H_{\sigma_{n_{J \setminus j}}(m)}(g_{m}),$$
where $\sigma_{n_J}(m)$ is precisely the number of times $g_m$ appears in the product.
\end{remark}

\section{The Proof of the Abstract Random Tensor Estimate}
We begin by recalling the statement that we will prove. We omit the proof of the real-valued case since it follows by an analogous argument to the complex-valued case.

Under the same assumptions as Theorem~\ref{Abstract Random Tensor Estimate}, let $G = G_{n_An_B}$ be the random tensor defined as
\begin{equation*}
        G_{n_An_B} := \sum_{n_J \in (\ints^d)^k} h_{n_Jn_An_B} \mathcal{L}(g_{n_J}^{\iota_J}).
\end{equation*}
Then it holds for all $p \geq 1$ that 
    \begin{equation*}
       \xp\big[\|G\|^p_{n_A\rightarrow n_B}\big]^{\frac{1}{p}} \leq C p^{\frac{k}{2}} (\log N)^{\frac{k}{2}} \max_{X \cupdot Y = {J}} \|h\|_{n_An_X \rightarrow n_Bn_Y},
    \end{equation*}
where $C=C(k,d,|A|,|B|)$ is a constant.

The proof will proceed by induction on the number of Gaussians in the product, which is equivalent to the size of $J$ or the degree of the renormalizing polynomial. We use two key lemmas: the \textit{Gaussian case} is the base case and \textit{probabilistic decoupling} is the main tool in the inductive step. The key new tools in our proof are (1) the direct use of the non-commutative Khintchine inequality in the Gaussian case and (2) the use of Laguerre-type polynomials in the probabilistic decoupling to account for pairings, allowing us to remove the the square-free requirement. 
Note that if we replace the Laguerre-type polynomials in this proof with Hermite polynomials, the real-valued case follows by an almost identical argument. 

\begin{lemma} [Gaussian case] \label{lemma: GaussianCase}
Let $h=h_{{n_0}{n_A}{n_B}}$ and $(g_n)_{n\in \ints^d}$ be as given in Theorem~\ref{Abstract Random Tensor Estimate}. Then, for all $p\geq 1$,
\begin{align*} \label{eq:GaussianCase}
    \nonumber & \xp \bigg[ \Big\| \sum_{n_0} h_{n_0n_An_B}g_{n_0} \Big\|^p_{n_A \rightarrow n_B} \bigg]^{\frac{1}{p}} \\
    &\lesssim c\sqrt{p\log N} \max \left\{ \left\| h_{n_0n_An_B} \right\|_{n_0n_A \rightarrow n_B}, \left\| h_{n_0n_An_B} \right\|_{n_A \rightarrow n_0n_B}\right\},
\end{align*} 
where $c$ is a constant in terms of $|A|$, $|B|$, and $d$.
\end{lemma}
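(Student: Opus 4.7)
The plan is to recognize the random tensor $\sum_{n_0} h_{n_0 n_A n_B} g_{n_0}$ as a Gaussian sum of deterministic matrices and to invoke the non-commutative Khintchine inequality (Lemma~\ref{NoncommutativeKhintchine}) directly. Flattening the multi-indices, for each $n_0 \in \ints^d$ we regard the slice $A_{n_0}$ with entries $(A_{n_0})_{n_B, n_A} := h_{n_0 n_A n_B}$ as a linear operator from $\ell^2_{n_A}$ to $\ell^2_{n_B}$. By the remark following Definition~\ref{def: Tensor Norm}, the tensor norm $\|\cdot\|_{n_A \to n_B}$ coincides with the standard operator norm of such an operator, so
$$\Big\|\sum_{n_0} h_{n_0 n_A n_B} g_{n_0}\Big\|_{n_A \to n_B} = \Big\|\sum_{n_0} g_{n_0} A_{n_0}\Big\|_{\text{op}}.$$
The support assumption $|n_0|, |n_a|, |n_b| \leq N$ renders the problem effectively finite-dimensional with matrix dimension $M \lesssim_{d,|A|,|B|} N^{d(|A|+|B|)}$.

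Applying Lemma~\ref{NoncommutativeKhintchine} to the coefficient matrices $(A_{n_0})$ and the independent complex Gaussians $(g_{n_0})$ then yields
$$\xp\big[\|X\|_{\text{op}}^{2p}\big]^{1/(2p)} \lesssim \sqrt{p \log M}\, \max\Big\{ \big\|\textstyle\sum_{n_0} A_{n_0} A_{n_0}^*\big\|_{\text{op}}^{1/2},\ \big\|\textstyle\sum_{n_0} A_{n_0}^* A_{n_0}\big\|_{\text{op}}^{1/2}\Big\}.$$
Since $\log M \lesssim_{d,|A|,|B|} \log N$, the logarithmic factor absorbs into the constant $c$, and the passage from the $2p$-th to the $p$-th moment is Jensen's inequality at the cost of another constant.

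It remains to identify the two deterministic norms with the two tensor norms on the right-hand side of the lemma. Because $\sum_{n_0} A_{n_0} A_{n_0}^*$ is self-adjoint on $\ell^2_{n_B}$,
$$\Big\|\sum_{n_0} A_{n_0} A_{n_0}^*\Big\|_{\text{op}} = \sup_{\|v\|_{\ell^2_{n_B}}=1}\,\sum_{n_0} \|A_{n_0}^* v\|_{\ell^2_{n_A}}^2 = \sup_{\|v\|=1}\sum_{n_0,n_A} \Big|\sum_{n_B} \overline{h_{n_0 n_A n_B}} v_{n_B}\Big|^2,$$
which equals $\|h\|_{n_0 n_A \to n_B}^2$ by Definition~\ref{def: Tensor Norm} together with the duality/conjugation lemma. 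A symmetric computation gives $\|\sum_{n_0} A_{n_0}^* A_{n_0}\|_{\text{op}}^{1/2} = \|h\|_{n_A \to n_0 n_B}$, and combining with the previous display finishes the proof.

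The main obstacle is purely bookkeeping: correctly matching the flattened multi-index structure of the tensor with the matrix-level products $\sum A_{n_0} A_{n_0}^*$ and $\sum A_{n_0}^* A_{n_0}$, so that the two deterministic operator norms line up with the two tensor norms corresponding to the partitions $(X, Y) = (\{n_0\}, \emptyset)$ and $(\emptyset, \{n_0\})$ appearing in the conclusion. The use of complex rather than real Gaussians affects only the constants in Lemma~\ref{NoncommutativeKhintchine}, so no new ideas are needed there.
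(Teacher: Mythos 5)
Your proof is correct and follows essentially the same strategy as the paper: flatten the slices of $h$ over $n_0$ into operators $\ell^2_{n_A}\to\ell^2_{n_B}$, apply the non-commutative Khintchine inequality, and identify the two covariance norms with $\|h\|_{n_A\to n_0 n_B}$ and $\|h\|_{n_0 n_A\to n_B}$ via the conjugation/duality lemma. The only cosmetic difference is that the paper evaluates the two deterministic norms by invoking the merging estimate (Lemma~\ref{lemma: MergingEstimate}), giving an inequality that happens to be tight, whereas you compute them directly from the variational form of the operator norm of a positive semi-definite sum; both routes are valid and of comparable length.
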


\begin{proof}
    There are two important ingredients in this proof. On the probabilistic side, we use the non-commutative Khintchine inequality (see Lemma~\ref{NoncommutativeKhintchine}), to bound the expectation of the operator norm of the random tensor by the operator norm of some deterministic tensor. Then, on the deterministic side, we use the merging estimate (see Lemma~\ref{lemma: MergingEstimate}) to bound the operator norm of this deterministic tensor in terms of tensor norm of $h$.
    \newline

    For each $n_0 \in \ints^d$, we let $\mathcal{T}_{n_0} : \ell^2_{n_A} \rightarrow \ell^2_{n_B}$ be the linear operator:
    \begin{equation*}
        (\mathcal{T}_{n_0}z)_{n_B} = \sum_{n_A}h_{n_0n_An_B}z_{n_A}.
    \end{equation*}
    Then, using the definition of $\mathcal{T}_{n_0}$, we can write 
    \begin{align} \label{eq: gc1}
        \bigg\| \sum_{n_0} h_{n_0n_An_B}g_{n_0} \bigg\|^2_{n_A \rightarrow n_B}  
        &= \bigg\| \sum_{n_0}  g_{n_0} \mathcal{T}_{n_0} \bigg\|^2_{\text{op}},
    \end{align} 
    where $\| \cdot \|_{\text{op}}$ is the usual operator norm.
   The equality follows from the observation that, when the tensor is viewed as a linear operator, the tensor norm corresponds with the usual operator norm.

  Taking the expectation of \eqref{eq: gc1}, we can use the non-commutative Khintchine inequality (Lemma~\ref{NoncommutativeKhintchine}) as follows
    \begin{align}
         \xp \bigg[  \Big\| \sum_{n_0} \mathcal{T}_{n_0}\ g_{n_0} \Big\|_{\text{op}}^p\bigg]^{\frac{1}{p}} 
        \!\! \lesssim &\  c\sqrt{\!p \log N}  \max \bigg\{ \! \Big\|\! \sum_{n_0} \mathcal{T}_{n_0}  \mathcal{T}_{n_0}^*  \Big\|^{\frac{1}{2}}_{\text{op}} , \Big\| \!\sum_{n_0} \mathcal{T}_{n_0}^*  \mathcal{T}_{n_0}  \Big\|^{\frac{1}{2}}_{\text{op}} \!\bigg\}, \label{eg: gc2}
    \end{align}
    where $c$ is a constant in terms of the dimension of the spaces to and from which we are mapping. This dimension is at most $D N^D$ for a for a constant $D$ depending on $|A|$, $|B|$, and $d$.

    We estimate the two arguments in \eqref{eg: gc2} separately. Observe that $(\sum_{n_0} \mathcal{T}_{n_0}^* \mathcal{T}_{n_0})$ is a linear operator mapping $\ell^2_{n_A} \rightarrow \ell^2_{n_A'}$ and, similarly, $(\sum_{n_0} \mathcal{T}_{n_0} \mathcal{T}_{n_0}^*)$ is a linear operator mapping $\ell^2_{n_B'} \rightarrow \ell^2_{n_B}$. Using the definition of $\mathcal{T}_{n_0}$, we can express the first argument in the maximum as 
    \begin{equation*}
        \Big( \sum_{n_0} \mathcal{T}_{n_0}^* \mathcal{T}_{n_0} \Big)_{n'_A n_A} = \sum_{n_0, n_B} \overline{h_{n_0n_A'n_B}}h_{n_0n_An_B}.
    \end{equation*}

    Then, using the merging estimate (Lemma~\ref{lemma: MergingEstimate}), we have that 
    \begin{align*}
        \bigg\| \sum_{n_0} \mathcal{T}_{n_0}^* \mathcal{T}_{n_0} \bigg\|_{\text{op}} &=  \bigg\| \sum_{n_0, n_B} \overline{h_{n_0n_A'n_B}}h_{n_0n_An_B} \bigg\|_{n_A\rightarrow n'_A} \\ 
        &\leq  \big\| h_{n_0n_An_B} \big\|_{n_A\rightarrow n_0n_B} \big\| \overline{h_{n_0n'_An_B}} \big\|_{n_0n_B\rightarrow n'_A}    \\ 
        & = \big\| h_{n_0n_An_B} \big\|_{n_A\rightarrow n_0n_B}^2,
    \end{align*}
    where the last equality follows from duality of the tensor norm in \eqref{eq: Tensor Norm Duality}. Analogously,
    \begin{equation*}
          \left\| \sum_{n_0} \mathcal{T}_{n_0} \mathcal{T}_{n_0}^* \right\|_{\text{op}} \leq \left\| h_{n_0n_An_B} \right\|_{n_0n_A\rightarrow n_B}^2.
    \end{equation*}

    Therefore, we can combine these bounds with \eqref{eg: gc2}, to get the desired inequality.
\end{proof}

\begin{lemma}[Probabilistic decoupling]\label{lemma: Probabilistic Decoupling}
Let $h_{n_Jn_An_B}$ and $(g_n)_{n\in \ints^d}$ be as in Theorem~\ref{Abstract Random Tensor Estimate} and let $(\tilde{g}_n)_{n\in \ints^d}$ be an independent copy of $(g_n)_{n\in \ints^d}$. For all $p \geq 1$, it holds that 
\begin{align*}
    & \xp \bigg[ \Big\| \sum_{n_J \in (\ints^d)^k} h_{n_Jn_An_B} \mathcal{L}(g_{n_J}^{\iota_J})\Big\| _{n_A \rightarrow n_B}^p  \bigg]^{\frac{1}{p}} 
    \\
    &\leq \frac{\pi}{2} \sum_{j \in J}      \xp \bigg[ \Big\| \sum_{n_J\in (\ints^d)^k} h_{n_Jn_An_B}  \tilde{g}_{n_j} \mathcal{L}(g_{n_{J \setminus j}}^{\iota_{J \setminus j}}) \Big\| _{n_A \rightarrow n_B}^p  \bigg] ^{\frac{1}{p}}.
\end{align*}
\end{lemma}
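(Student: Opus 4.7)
The plan is to set up a smooth Gaussian interpolation between $g$ and $\tilde g$ and combine the fundamental theorem of calculus with a conditioning step that exploits the mean-zero property from Lemma~\ref{lem: Laguerre-type expectation}. Following Lemma~\ref{lemma: Laguerre-type partial derivatives}, define $g_n(\varphi) := \sin(\varphi) g_n + \cos(\varphi) \tilde g_n$ for $\varphi \in [0, \pi/2]$ and set
\[
F(\varphi) := \sum_{n_J \in (\ints^d)^k} h_{n_J n_A n_B} \, \mathcal{L}(g^{\iota_J}_{n_J}(\varphi)).
\]
Then $F(\pi/2)$ is exactly the random tensor to be bounded, while $F(0) = \sum_{n_J} h_{n_J n_A n_B} \, \mathcal{L}(\tilde g^{\iota_J}_{n_J})$ depends only on $\tilde g$.

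First I would apply the fundamental theorem of calculus and Lemma~\ref{lemma: Laguerre-type partial derivatives} to obtain
\[
F(\pi/2) - F(0) = \int_0^{\pi/2} \partial_\varphi F(\varphi) \, d\varphi = \int_0^{\pi/2} \sum_{j \in J} (\partial_\varphi F(\varphi))_j \, d\varphi,
\]
where $(\partial_\varphi F(\varphi))_j := \sum_{n_J} h_{n_J n_A n_B} \, \tilde g^{\iota_j}_{n_j}(\varphi) \, \mathcal{L}(g^{\iota_{J\setminus j}}_{n_{J\setminus j}}(\varphi))$ and $\tilde g_n(\varphi) := \partial_\varphi g_n(\varphi) = \cos(\varphi) g_n - \sin(\varphi) \tilde g_n$. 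The crucial step is to take the conditional expectation $\xp_{\tilde g}[\,\cdot \mid g]$ of both sides: the left-hand side becomes $F(\pi/2)$ since it does not depend on $\tilde g$, while Lemma~\ref{lem: Laguerre-type expectation} applied to the $\tilde g$-family yields $\xp_{\tilde g}[F(0) \mid g] = 0$. This gives the clean representation
\[
F(\pi/2) = \int_0^{\pi/2} \xp_{\tilde g}\bigl[\partial_\varphi F(\varphi) \mid g\bigr] \, d\varphi.
\]

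Next I would pass to the tensor-norm $L^p$ norm and apply, in sequence, conditional Jensen's inequality (which lets us replace $\xp_g[\|\xp_{\tilde g}[\,\cdot\,]\|^p]$ by the full expectation $\xp_{g,\tilde g}[\|\cdot\|^p]$), Minkowski's integral inequality (to pull the $\varphi$-integral outside the norm), and the triangle inequality over $j \in J$. This yields
\[
\xp\bigl[\|F(\pi/2)\|^p_{n_A \to n_B}\bigr]^{1/p} \leq \int_0^{\pi/2} \sum_{j \in J} \xp\bigl[\|(\partial_\varphi F(\varphi))_j\|^p_{n_A \to n_B}\bigr]^{1/p} \, d\varphi.
\]
The final ingredient is a distributional identity for the summand: the $2 \times 2$ real matrix rotating $(g_n, \tilde g_n)$ to $(g_n(\varphi), \tilde g_n(\varphi))$ is orthogonal, so $(g_n(\varphi), \tilde g_n(\varphi))_{n \in \ints^d}$ is jointly equidistributed with $(g_n, \tilde g_n)_{n \in \ints^d}$. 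Combined with the rotation invariance $\tilde g^{\iota_j}_{n_j} \stackrel{d}{=} \tilde g_{n_j}$ for standard complex Gaussians, this gives
\[
(\partial_\varphi F(\varphi))_j \stackrel{d}{=} \sum_{n_J} h_{n_J n_A n_B} \, \tilde g_{n_j} \, \mathcal{L}(g^{\iota_{J \setminus j}}_{n_{J \setminus j}}),
\]
so the integrand is constant in $\varphi$ and integration contributes the factor $\pi/2$, matching the claim.

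The main obstacle is justifying this distributional identity rigorously. Concretely, one must verify that $(g_n(\varphi), \tilde g_n(\varphi))$ is a pair of independent standard complex Gaussians (via the covariance computation $\sin\varphi \cos\varphi - \sin\varphi \cos\varphi = 0$ together with joint Gaussianity), that this independence upgrades to joint independence of the full family indexed by $n \in \ints^d$, and that for each fixed $n_J$ the factor $\tilde g^{\iota_j}_{n_j}(\varphi)$ remains independent of $\mathcal{L}(g^{\iota_{J\setminus j}}_{n_{J\setminus j}}(\varphi))$ even when some index in $J \setminus j$ coincides with $n_j$. These checks reduce to elementary covariance computations but carry the substantive content of the decoupling step.
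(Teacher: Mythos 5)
Your proposal is correct and follows essentially the same route as the paper's proof: interpolate via $g(\varphi)=\sin(\varphi)g+\cos(\varphi)\tilde g$, use the mean-zero property of $\mathcal{L}(\tilde g_{n_J}^{\iota_J})$ together with Jensen's inequality to introduce the independent copy, apply the fundamental theorem of calculus with Lemma~\ref{lemma: Laguerre-type partial derivatives}, Minkowski's integral inequality, and the triangle inequality over $j$, and finish with the rotational invariance identity $(g(\varphi),\partial_\varphi g(\varphi))\overset{d}{=}(g,\tilde g)$. The only cosmetic difference is that you apply FTC before the conditional Jensen step while the paper applies Jensen first and then FTC; the ingredients and the substance are identical, and your closing concerns about the distributional identity are exactly the checks the paper relies on (joint Gaussianity plus the orthogonality of the rotation).
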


\begin{proof}
We prove this by induction on the size of $J$. When $J = \{1\}$, we have that $\mathcal{L}(g_{n_J}^{\iota_J}) =g_{n_1}$ and $\mathcal{L}(g_{n_{J\setminus j}}^{\iota_{J\setminus j}}) = 1$. The inequality follows immediately. \newline 
Now, consider $k \geq 2$ and $J = \{1, 2, ..., k\}$. For expository purposes, we write $\xp_g$ and $\xp_{\tilde{g}}$ to denote the expectations taken over $g$ and $\tilde{g}$, respectively. Furthermore, for any $a = (a_{n_J})_{n_J\in (\ints^d)^k}$, we introduce the notation
\begin{equation} \label{eq: pd1}
    F(a_{n_J}) := \bigg\| \sum_{n_J\in (\ints^d)^k} h_{n_Jn_An_B} a_{n_J} \bigg\| _{n_A \rightarrow n_B},
\end{equation}
which is convex and one-homogeneous.

By Lemma~\ref{lem: Laguerre-type expectation}, we have that $\xp [\mathcal{L}(\tilde{g}_{n_J}^{\iota_J})] = 0$ when $k \geq 1$. Consequently, it holds that
\begin{align*} 
     \xp_g \bigg[ \Big\| \sum_{n_J\in (\ints^d)^k} h_{n_Jn_An_B} \mathcal{L}(g_{n_{J}}^{\iota_{J }}) \Big\|_{n_A \rightarrow n_B}^p \bigg] ^{\frac{1}{p}} & =\xp_g \left[ F (\mathcal{L}(g_{n_{J}}^{\iota_{J}}))^p  \right]^{\frac{1}{p}} \\ 
      & = \xp_g \left[ F(\mathcal{L}(g_{n_{J}}^{\iota_{J}}) - \xp_{\tilde{g}}[\mathcal{L}(\tilde{g}_{n_{J}}^{\iota_{J}})])^p \right] ^{\frac{1}{p}} \\
     &\leq \  \xp_g \xp_{\tilde{g}}\left[ F(\mathcal{L}(g_{n_{J }}^{\iota_{J}})- \mathcal{L}(\tilde{g}_{n_{J}}^{\iota_{J}}))^p \right]^{\frac{1}{p}},
\end{align*}
where the last line follows by Jensen's inequality. For any $\varphi \in [0, \frac{\pi}{2}]$, we define  
\begin{equation} \label{eq:gaussianphi}
    g(\varphi) := \sin(\varphi)g + \cos(\varphi)\tilde{g}.
\end{equation}
By definition, $g(0) = \tilde{g}$ and $g(\frac{\pi}{2}) = g$, and thus $g(\varphi)$ interpolates between $g$ and $\tilde{g}$. We also observe that
\begin{equation*}
    \partial_\varphi g(\varphi) = \cos(\varphi)g - \sin(\varphi)\tilde{g}.
\end{equation*}
Since $(\sin(\varphi), \cos(\varphi))$ and $(\cos(\varphi), -\sin(\varphi))$ are orthonormal in $\reals^2$, the rotational invariance of Gaussians implies that, for all $\varphi \in [0, \frac{\pi}{2}]$, the following are equal in distribution:
\begin{equation*} 
    (g(\varphi),\partial_\varphi g(\varphi)) \overset{d}{=}     (\partial_\varphi g(\varphi), g(\varphi)) \overset{d}{=} (g, \tilde{g}).
\end{equation*}

By substituting in \eqref{eq:gaussianphi} and the triangle inequality, it follows that 
\begin{align}
    & \nonumber \ \xp_g \xp_{\tilde{g}}\left[ F \left( \mathcal{L}(g_{n_{J }}^{\iota_{J}}) - \mathcal{L}(\tilde{g}_{n_{J }}^{\iota_{J}}) \right)^p \right]^{\frac{1}{p}} \\
    =& \nonumber \  \xp_g \xp_{\tilde{g}}\left[ F\left( \mathcal{L}\big(g_{n_{J }}^{\iota_{J}}\big(\tfrac{\pi}{2}\big)\big) - \mathcal{L}\big(g_{n_{J }}^{\iota_{J}}(0)\big)\right)^p \right]^{\frac{1}{p}} \\
    =& \nonumber  \xp_g \xp_{\tilde{g}}\bigg[ F\bigg( \int_0^{\frac{\pi}{2}} \partial_{\varphi} \mathcal{L}(g_{n_{J }}^{\iota_{J}}(\varphi)) d\varphi
    \bigg)^p \bigg]^{\frac{1}{p}} \\ 
    \leq &  \int_0^{\frac{\pi}{2}} \xp_g \xp_{\tilde{g}}\bigg[ F\Big( \partial_{\varphi}   \mathcal{L}(g_{n_{J }}^{\iota_{J}}(\varphi)) \ \Big)^p \bigg]^{\frac{1}{p}} d\varphi. \label{eq: pd2}
\end{align}

By Lemma~\ref{lem: Laguerre-type expectation} and the triangle inequality, the integrands in \eqref{eq: pd2} are given by 

\begin{align*}
      \xp_g \xp_{\tilde{g}}\bigg[ F\Big( \partial_{\varphi}  \mathcal{L}(g_{n_{J }}^{\iota_{J}}(\varphi))  \Big)^p \bigg]^{\frac{1}{p}}  &= \xp_g \xp_{\tilde{g}}\bigg[ F \bigg( \sum_{j\in J}  \partial_{\varphi}\big[g_{n_j}(\varphi) \big] \mathcal{L}\big(g_{n_{J\backslash j}}^{\iota_{J\backslash j}}(\varphi) \big) \bigg)^p \bigg]  ^{\frac{1}{p}} \\
    & \leq \sum_{j \in J}  \xp_g \xp_{\tilde{g}}\bigg[ F \Big(  \partial_{\varphi}\big[ g_{n_j}(\varphi)\big] \mathcal{L} \big( g_{n_{J\backslash j}}^{\iota_{J\backslash j}}(\varphi)\big) \Big)^p \bigg]^{\frac{1}{p}}.
\end{align*}
Using that $(g(\varphi),\partial_\varphi g(\varphi))$ and $(g, \tilde{g})$ are equal in distribution and evaluating the integral, we find that 
\begin{align*}
    \eqref{eq: pd2} & =\int_0^{\frac{\pi}{2}} \sum_{j \in J} \xp_g \xp_{\tilde{g}}\Big[ F \left(  \tilde{g}_{n_j} \mathcal{L} \big( g_{n_{J\backslash j}}^{\iota_{J\backslash j}}\big) \right)^p \Big]^{\frac{1}{p}} d\varphi \\
    & = \ \frac{\pi}{2} \sum_{j \in J} \xp_g \xp_{\tilde{g}}\Big[ F \left(  \tilde{g}_{n_j}\mathcal{L} \big( g_{n_{J\backslash j}}^{\iota_{J\backslash j}} \big)\right)^p \Big]^{\frac{1}{p}}.
\end{align*}

By recalling the definition of $F$, we have our desired inequality. 
\end{proof}

\begin{proof} [Proof of Theorem~\ref{Abstract Random Tensor Estimate}]
The proof follows by induction on $|J|$.
When $J = \{1\}$, then $\mathcal{L}(g_{n_1}^{\iota _1}) = g_{n_1}^{\iota _1}$, the inequality follows directly from the Gaussian case (Lemma~\ref{lemma: GaussianCase}).
Let $k\geq 2$ and $J = \{1, 2, ..., k\}$. By probabilistic decoupling (Lemma~\ref{lemma: Probabilistic Decoupling}), it follows that 
\begin{align}
    \nonumber \xp \big[ \|G\|^p_{n_A\rightarrow n_B} \big] ^{\frac{1}{p}} 
    &=  \xp \Bigg[ \bigg\| \sum_{n_J \in (\ints^d)^k} h_{n_Jn_An_B} \mathcal{L}(g_{n_J}^{\iota_J}) \bigg\|^p_{n_A\rightarrow n_B} \Bigg] ^{\frac{1}{p}} \\ 
    &\leq \frac{\pi}{2} \sum_{j \in J} \xp \Bigg[ \bigg\| \sum_{n_J\in (\ints^d)^k} h_{n_Jn_An_B}  \tilde{g}_{n_j} \mathcal{L}(g_{n_{J\backslash j}}^{\iota_{J\backslash j}})\bigg\| _{n_A \rightarrow n_B}^p  \Bigg] ^{\frac{1}{p}}. \label{eq: Main Proof Induction}
\end{align}

We estimate each term in the sum separately. Given some $j \in J$, we define $\tilde{J} := J\backslash j$ and $n_{\tilde{J}} = (n_1, ..., n_{j-1}, n_{j+1}, ..., n_k) \in (\ints^d)^{k-1}$.
By viewing $$\sum_{n_J\in (\ints^d)^k} h_{n_Jn_An_B} \mathcal{L}(g_{n_{J\backslash j}}^{\iota_{J\backslash j}}) $$ as the tensor, we may apply the Gaussian case to $\tilde{g}_{n_j}^{\iota_j}$ as follows
\begin{align}
    \nonumber &\ \xp \bigg[ \Big\| \sum_{n_J\in (\ints^d)^k} h_{n_Jn_An_B}   \tilde{g}_{n_j}^{\iota_j} \mathcal{L}\big(g_{n_{J\backslash j}}^{\iota_{J\backslash j}}\big) \Big\| _{n_A \rightarrow n_B}^p  \bigg] ^{\frac{1}{p}}\\
    \nonumber &=  \xp_g \xp_{\tilde{g}} \Bigg[ \bigg\| \sum_{j \in \ints^d} \Big(  \sum_{n_{\tilde{J}}\in (\ints^d)^{k-1}} h_{n_Jn_An_B} \mathcal{L}\big(g_{n_{\tilde{J}}}^{\iota_{\tilde{J}}}\big) \Big)\tilde{g}_{n_j}\bigg\| _{n_A \rightarrow n_B}^p \Bigg]  ^{\frac{1}{p}} \\
    &\leq  C_1 \sqrt{p\log N}\!\! \max_{X \cupdot Y = \{j\}} \Bigg\{ \xp_g \Bigg[ \bigg\|\sum_{n_{\tilde{J}}\in (\ints^d)^{k-1}} \!\!\!\! h_{n_Jn_An_B} \mathcal{L}\big(g_{n_{\tilde{J}}}^{\iota_{\tilde{J}}}\big)  \bigg\|^p _{n_An_X  \rightarrow n_Bn_Y} \!\Bigg] ^\frac{1}{p} \Bigg\}, \label{eq: Main proof 1}
\end{align}
where $C_1$ is a constant in terms of $|A|$, $|B|$, and $d$. By the inductive hypothesis, the argument in the maximum in \eqref{eq: Main proof 1} can be estimated by  
\begin{align*}
    &\xp \bigg[ \Big\| \sum_{n_{\tilde{J}} \in (\ints^d)^{k-1}} \!\!h_{n_Jn_An_B} \el(g_{n_{\tilde{J}}}^{\iota_{\tilde{J}}} )  \Big\|^p _{n_An_X  \rightarrow n_Bn_Y}\! \bigg] ^\frac{1}{p} 
    \\
    &\leq C_2 p^{\frac{k-1}{2}} (\log N)^{\frac{k-1}{2}} \max_{Z \cupdot W = {\tilde{J}}} \|h\|_{n_An_Xn_Z \rightarrow n_Bn_Yn_W},
\end{align*}
where $C_2$ is a constant dependent on $|A|$, $|B|$, $d$, and $k$.
Thus, we have that \eqref{eq: Main proof 1} is bounded by 
\begin{align*}
    \eqref{eq: Main proof 1} & \leq C_1 \ p^{\frac{1}{2}} (\log N)^{\frac{1}{2}} \max_{X \cupdot Y = \{j\}} \left\{ C_{2} p^{\frac{k-1}{2}} (\log N)^{\frac{k-1}{2}} \max_{Z \cupdot W = {\tilde{J}}} \|h\|_{n_An_Xn_Z \rightarrow n_Bn_Yn_W} \right\} \\
    &= C_1 C_2 p^{\frac{k}{2}}  (\log N)^{\frac{k}{2}} \max_{X \cupdot Y = \{j\}, Z \cupdot W = {\tilde{J}}} \left\{ \|h\|_{n_An_Xn_Z \rightarrow n_Bn_Yn_W} \right\} \\
    &= C_1 C_2 p^{\frac{k}{2}}   (\log N)^{\frac{k}{2}} \max_{X \cupdot Y = J} \left\{ \|h\|_{n_An_X \rightarrow n_Bn_Y} \right\}, 
\end{align*}
where the last equality follows by observing that, for every pair of partitions $X$, $Y$ of $\{j\}$ and $W$, $Z$ of $\tilde{J}= J \backslash \{j\}$, the sets $X \cup Z$, $Y \cup W$ form a partition of $\{j\} \cup \tilde{J} = J$. 
We now have a bound on each term in the sum (that is independent of the particular $j \in J$), so we may rewrite \eqref{eq: Main Proof Induction} as
\begin{align*}
    & \sum_{j \in J} \frac{\pi}{2}   \xp \bigg[ \Big\| \sum_{n_J\in (\ints^d)^k} h_{n_Jn_An_B}   \tilde{g}_{n_j} \mathcal{L}(g_{n_{J\backslash j}}^{\iota_{J\backslash j}}) \Big\| _{n_A \rightarrow n_B}^p  \bigg] ^{\frac{1}{p}} \\&\leq  \ \sum_{j \in J} \tfrac{\pi}{2} C_1 C_2\  p^{\frac{k}{2}}  (\log N)^{\frac{k}{2}} \max_{X \cupdot Y = J} \left\{ \|h\|_{n_An_X \rightarrow n_Bn_Y} \right\} \\
     &= \ C p^{\frac{k}{2}}  (\log N)^{\frac{k}{2}} \max_{X \cupdot Y = J} \left\{ \|h\|_{n_An_X \rightarrow n_Bn_Y} \right\},
\end{align*}
where we set $C := k \cdot \frac{\pi}{2}\cdot C_1\cdot C_2$, which is a constant depending only on $|A|$, $|B|$, $d$, and $k$, concluding the proof.
\end{proof}

\section*{Acknowledgments}
The author would like to thank Professor Bjoern Bringmann for suggesting this problem, for providing direction, and for his incredibly generous support throughout this endeavor. The author would also like to thank Professor Ramon van Handel for very helpful comments and pointing to references in the literature.

\bibliographystyle{amsplain}
\bibliography{references}

\end{document}